\def\be{\begin{equation}}
\def\ee{\end{equation}}
\def\bse{\begin{subequations}}
\def\ese{\end{subequations}}
\let\er\eqref
\let\be\beta
\newcommand{\R}{{\mathbb R}}
\newtheorem{theorem}{Theorem}
\newtheorem{lemma}[theorem]{Lemma}
\newtheorem{proposition}[theorem]{Proposition}
\newtheorem{definition}[theorem]{Definition}
\def\bse{\begin{subequations}}
\def\ese{\end{subequations}}
\title{The aggregation-diffusion equation with the intermediate exponent}
\author{ Shen Bian\footnote{Corresponding author. Department of Mathematical Sciences, Beijing University of Chemical Technology, Beijing, 100029, P.R. CHINA. Email address: bianshen66@163.com.}
  \and Jiale Bu\footnote{Department of Mathematical Sciences, Beijing University of Chemical Technology, Beijing, 100029, P.R. CHINA. Email address: 2022201105@buct.edu.cn.} }
\date{}
\begin{document}
\let\cleardoublepage\clearpage

\maketitle

\begin{abstract}
We consider a Keller-Segel model with non-linear porous medium type diffusion and nonlocal attractive power law interaction, focusing on potentials that are less singular than Newtonian interaction. Here, the nonlinear diffusion is chosen to be $\frac{2d}{d+2s}<m<2-\frac{2s}{d}$ in which case the steady states are compactly supported. We analyse under which conditions on the initial data the regime that attractive forces are stronger than diffusion occurs and classify the global existence and finite time blow-up of solutions. It is shown that there is a threshold value which is characterized by the optimal constant of a variant of Hardy-Littlewood-Sobolev inequality such that the solution will exist globally if the initial data is below the threshold, while the solution blows up in finite time when the initial data is above the threshold.
\end{abstract}

\section{Introduction}
The present paper concerns the following aggregation-diffusion equation
\begin{align}\label{fracks}
\left\{
      \begin{array}{ll}
      u_t = \Delta u^m-\nabla \cdot \left(u \nabla c\right), ~~& x\in \R^{d},~t> 0,\\
     u(x,0) =u_{0}(x) \ge 0, ~~& x\in \R^d.
      \end{array}\right.
\end{align}
Here $d \ge 3$ and the diffusion exponent $m>1$. $u$ represents the density of cells, the chemoattractant $c$ is governed by a fractional diffusion process and can be expressed by the convolution of $u(x,t)$ with Riesz potential, that's
\begin{align}\label{CC}
c(x,t) =(-\Delta)^{-s}u=\mathcal{K}  \ast u = c_{d,s}\int_{\mathbb R^d} \frac{u(y,t)}{|x-y|^{d-2s}} dy,\quad c_{d,s}=\frac{\Gamma(d/2-s)}{\pi^{d/2}4^s \Gamma(s)},
\end{align}
where the potential is less singular than Newtonian interaction, i.e.
\begin{align}\label{s1}
2<2s<d.
\end{align}
The parabolic equation on $u$ gives nonnegative solutions $u(x,t)\ge 0, c(x,t)\ge 0$. Another property we will use is the conservation of the total number of cells
\begin{align}
M:=\int_{\R^d} u_0(x)dx=\int_{\R^d}u(x,t) dx.
\end{align}
Initial data will be assumed throughout this paper
\begin{align}\label{12}
u_0 \in L_+^1 \cap L^\infty(\R^d), ~~ \int_{\R^d} |x|^2 u_0(x) dx<\infty, ~~ \nabla u_0^m \in L^2(\R^d).
\end{align}
The nonlinear diffusion $m>1$ models the local repulsion of cells while the attractive nonlocal term models cell movement toward chemotactic sources or attractive interaction between cells due to cell adhesion \cite{BL98,FW03,GC08,KS70,MT15,PB15}. The main characteristic of equation \er{fracks} is the competition between the diffusion and the nonlocal aggregation which may lead to finite-time blow-up or global existence. This is well presented by the free energy
\begin{align}\label{Fu}
F(u)=\frac{1}{m-1} \int_{\R^d} u^m dx-\frac{c_{d,s}}{2} \iint_{\R^d \times \R^d} \frac{u(x,t)u(y,t)}{|x-y|^{d-2s}} dxdy.
\end{align}
As a matter of fact, \er{fracks} can be recast as
\begin{align}\label{chemical}
u_t=\nabla \cdot \left( u\nabla \mu  \right),
\end{align}
where $\mu$ is the chemical potential
\begin{align}
\mu=\frac{m}{m-1}u^{m-1}-c.
\end{align}
Multiplying \er{chemical} by $\mu$ and integrating it in space yield
\begin{align}
\frac{d F(u)}{dt}+\int_{\R^d} u|\nabla \mu|^2 dx=0.
\end{align}
This implies that $F[u(\cdot,t)]$ is non-increasing with respect to time.

Let us mention that equation \er{fracks} possesses a scaling invariance which leaves the $L^p$ norm invariant where
\begin{align}
p:=\frac{d(2-m)}{2s}
\end{align}
and produces a balance between the diffusion and the aggregation terms. Indeed, if $u(x,t)$ is a solution to \er{fracks}, then $u_\lambda(x,t)=\lambda^{\frac{2s}{2-m}} u \left( \lambda x,\lambda^{2+\frac{2s(m-1)}{2-m}} t \right)$ is also a solution to \er{fracks} and this scaling preserves the $L^p$ norm $\|u_\lambda\|_{L^p(\R^d)}=\|u\|_{L^p(\R^d)}.$ The exponent $m=2-2s/d$ produces a balance in the mass invariant scaling $u_\lambda=\lambda^d u\left( \lambda x,\lambda^{d+2-2s} t \right)$ of the diffusion term
$\lambda^{dm+2} \Delta u_\lambda^m$ and the aggregation term $\lambda^{2d+2-2s} \nabla \cdot \left( u_\lambda \nabla  \mathcal{K} \ast u_\lambda \right)$. Under the mass invariant scaling, we have that for the subcritical case $m>2-2s/d$, the aggregation dominates the diffusion for low density and prevents spreading. While for high density, the diffusion dominates the aggregation thus blow-up is precluded. On the contrary, for the supercritical case $1 < m<2-2s/d,$ the aggregation dominates the diffusion for high density (large $\lambda$) and the density may have finite-time blow-up. While for low density (small $\lambda$), the diffusion dominates the aggregation and the density has infinite-time spreading. These behaviors also appear in many other physical systems such as thin film, Hele-Shaw, stellar collapse as well as the nonlinear Schr\"{o}dinger equation \cite{BC99,Chandra,Wei83,Tom04}.

Our main goal in this paper is to classify the global existence and finite time blow-up of solutions by the initial data for the supercritical case $1<m<2-2s/d$. The main tool for the analysis of \er{fracks} is a variant of the Hardy-Littlewood-Sobolev inequality, see Section \ref{MHLS}: for all $f \in L_+^1 \cap L^m (\R^d),$ there exists an optimal constant $C_*$ such that
\begin{align}\label{VHLS}
\iint_{\R^d \times \R^d} \frac{f(x)f(y)}{|x-y|^{d-2s}} dxdy  \le C_* \|f\|_{L^1(\R^d)}^{\frac{(d+2s)m-2d}{d(m-1)}} \|f\|_{L^m(\R^d)}^{\frac{m(d-2s)}{d(m-1)}}.
\end{align}
The identification of the equality case in \er{VHLS} is given by a non-negative, radially symmetric and non-increasing function $W \in L_+^1 \cap L^m(\R^d)$, whence
\begin{align}\label{WW}
 \iint_{\R^d \times \R^d} \frac{W(x)W(y)}{|x-y|^{d-2s}} dxdy  = C_* \|W\|_{L^1(\R^d)}^{\frac{(d+2s)m-2d}{d(m-1)}} \|W \|_{L^m(\R^d)}^{\frac{m(d-2s)}{d(m-1)}}.
\end{align}
Moreover, Lemma \ref{existencecstar} will show that $W(x)$ is the radial steady state of \er{fracks}. Actually, the steady state $W$ can be interpreted in the distributional sense \cite{B2023}
\begin{align}\label{steadystates}
\left\{
      \begin{array}{ll}
       \frac{m}{m-1} W^{m-1}-C_s=\overline{C}, ~~\mbox{in} ~~\Omega, \\
        W=0 \mbox{~ in ~} \R^d \setminus \Omega, ~~ W>0 \mbox{ in } \Omega, \\
        C_s = c_{d,s} \int_{\mathbb R^d} \frac{W(y)}{|x-y|^{d-2s}} dy, ~~\mbox{in}~~ \R^d.
      \end{array}\right.
\end{align}
Here $\overline{C}$ is any constant chemical potential and $\Omega=\{x \in \R^d ~\big|~ W(x)>0 \}$ is a connected open set in $\R^d$. We stress that a lot of efforts have been put \cite{CCH17,CCH21,CHVY19,CHM18,CGH20} to investigate the steady states of \er{fracks} in terms of the diffusion exponent $m$.

For $m>2-2s/d,$ this is the diffusion-dominated regime. For $d \ge 1$ and $0<2s<d$, \cite{CHM18} showed that all stationary states of \er{fracks} are radially symmetric non-increasing, compactly-supported and they are global minimizers of the energy functional $F(u).$ Furthermore, uniqueness of steady states is obtained for $m>2-2s/d, d=1, 0<2s<1$ in \cite{CCH21,CHM18} and for $m \ge 2, 0<2s \le d+1$ in \cite{DYY22}. While non-uniqueness of steady states is proved for $2-2s/d<m<2, s \ge 1$ in \cite{CCH21}. In the local case $s=1,$ the Riesz kernel $\frac{1}{|x|^{d-2s}}$ is replaced by Newtonian interaction for $d \ge 3$ and log interaction for $d=2.$ Analogous results for stationary solutions are provided in \cite{CHVY19,kimyao} for $m>2-2/d~(d \ge 3)$ and \cite{CCV15} for $m>1~(d=2)$ respectively.

In the case $m=2-2s/d,$ this is the fair competition regime where the nonlinear diffusion and the nonlocal aggregation balance. The existence of the stationary states can only happen for a critical mass $M_c$ which is characterised by the optimal constant of a variant of the HLS inequality. $M_c$ turns out to be the only value of the mass for which the free energy $F(u)$ has minimizers and these minimizers are shown to be compactly supported, radially symmetric and non-increasing stationary solutions of equation \er{fracks}. See for instance \cite{CCH17} and the references therein.

For $1<m<2-2s/d,$ this is the aggregation-dominated regime. In this work, we will focus on the case $2d/(d+2s)<m<2-2s/d$ where all the stationary states are compactly supported (while they are not compactly supported for $1<m \le 2d/(d+2s)$). Here the invariant exponent $p$ turns into
\begin{align}
1<p=\frac{d(2-m)}{2s}<\frac{2d}{d+2s}<m.
\end{align}
We will make a fundamental use of the invariant space $L^p(\R^d)$ and the modified Hardy-Littlewood-Sobolev inequality \er{VHLS} to classify dynamical behaviors of solutions.

Before showing the main result, we firstly define the weak solution which we will deal with throughout this paper.
\begin{definition}\label{weakdefine}(Weak and free energy solution)
Let $u_0$ be an initial condition satisfying \er{12} and $T \in (0,\infty].$
\begin{enumerate}
\item[\textbf{(i)}]
  A weak solution to \er{fracks} with initial data $u_0$ is a non-negative function $u \in L^\infty \left(0,T;L_+^1 \cap L^\infty(\R^d) \right)$ and for any $0<t<T$,
 \begin{align}
 &\int_{\R^d} \psi u(\cdot,t)dx-\int_{\R^d} \psi u_0(x) dx =\int_0^t
 \int_{\R^d} \Delta \psi  u^m dx ds \nonumber \\
 & - \frac{c_{d,s}(d-2s)}{2} \int_0^t \iint_{\R^d\times \R^d}  \frac{[\nabla
 \psi(x)-\nabla \psi(y)] \cdot (x-y)}{|x-y|^2} \frac{u(x,s)
 u(y,s)}{|x-y|^{d-2s}} dxdy ds \label{weak}
 \end{align}
  for $\forall ~\psi \in C_0^\infty(\R^d)$.
\item[\textbf{(ii)}]
The weak solution $u$ is also a weak free energy solution to \er{fracks} if the following additional regularities
  \begin{align}
   \nabla u^{m-1/2} \in L^2\left(0,T;L^2(\R^d)\right), \label{0201} \\
    u \in L^{3}\left(0,T;L^{\frac{3d}{d-2(1-2s)}}(\R^d) \right) \label{02011}
  \end{align}
are fulfilled. Moreover, $F(u(\cdot,t))$ is a non-increasing function in time and satisfies that for all $t \in (0,T)$,
\begin{align}\label{Fuequality}
F[u(\cdot,t)]+\int_{0}^t \int_{\R^d} \left| \frac{2m}{2m-1}\nabla
u^{m-\frac{1}{2}}-\sqrt{u}\nabla c \right|^2dxds \leq
F(u_0),
\end{align}
  where $c=c_{d,s} \int_{\R^d} \frac{u(y)}{|x-y|^{d-2s}}dy.$
\end{enumerate}
\end{definition}
We emphasize that regularities \er{0201} and \er{02011} are enough to make sense of each term in \er{Fuequality}.
\begin{lemma}\label{uceps}
If $u \in L^{\frac{3d}{d-2(1-2s)}}(\R^d)$ and $c$ is expressed by \er{CC}, then
\begin{align}\label{06223}
\left\| u |\nabla c|^2 \right\|_{L^1(\R^d)} \leq C
\|u\|_{L^q(\R^d)}^3<\infty,~~q=\frac{3d}{d-2(1-2s)}\,.
\end{align}
\end{lemma}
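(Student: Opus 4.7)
The plan is to prove the estimate by a single application of Hölder's inequality followed by the Hardy-Littlewood-Sobolev (Riesz potential) estimate on $\nabla c$. Since $c = c_{d,s}|x|^{-(d-2s)}\ast u$, differentiating under the convolution gives $\nabla c = c_{d,s}\,\nabla(|x|^{-(d-2s)})\ast u$, which is essentially the Riesz potential of $u$ of order $2s-1>0$ (recall $2s>2$ from \er{s1}, so $\nabla(-\Delta)^{-s}$ is a smoothing operator of order $2s-1$).

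First, by Hölder's inequality with exponents $q$ and $q/(q-1)$,
\begin{align*}
\int_{\R^d} u\,|\nabla c|^2\,dx \le \|u\|_{L^q(\R^d)}\,\||\nabla c|^2\|_{L^{q/(q-1)}(\R^d)} = \|u\|_{L^q(\R^d)}\,\|\nabla c\|_{L^{2q/(q-1)}(\R^d)}^{2}.
\end{align*}
Next, I would invoke the Hardy-Littlewood-Sobolev inequality for $\nabla c$: if $1<p<d/(2s-1)$, then
\begin{align*}
\|\nabla c\|_{L^{r}(\R^d)} \le C\,\|u\|_{L^p(\R^d)},\qquad \frac{1}{r}=\frac{1}{p}-\frac{2s-1}{d}.
\end{align*}
The choice $p=q=3d/(d-2(1-2s))=3d/(d+4s-2)$ gives exactly $r=3d/(d-2s+1)=2q/(q-1)$, so the two exponents match perfectly and the estimates chain together to yield $\|u|\nabla c|^{2}\|_{L^1}\le C\|u\|_{L^q}^{3}$.

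The only verification that needs some care is that the HLS hypotheses $1<q<d/(2s-1)$ are actually satisfied under the standing assumption $2<2s<d$. The inequality $q>1$ reduces to $d+1>2s$, which holds because $2s<d$; the inequality $q<d/(2s-1)$ reduces to $2s-1<d$, which again follows from $2s<d$. Hence the whole computation is legitimate for every admissible $s$, and there is really no obstacle beyond correctly matching exponents. The essentially computational part that has to be done carefully is confirming that the algebra $\tfrac{d+4s-2}{3d}-\tfrac{2s-1}{d}=\tfrac{d-2s+1}{3d}$ does indeed produce $r=2q/(q-1)$, so that Hölder and HLS close into the cubic bound stated in the lemma.
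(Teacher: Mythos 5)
Your proposal is correct and follows essentially the same route as the paper: Hölder with exponents $q$ and $q/(q-1)$, then a convolution estimate for $\nabla c = C\, u \ast \frac{x}{|x|^{d+2-2s}}$ viewed as a Riesz potential of order $2s-1$. The paper phrases the second step as the weak Young inequality with the kernel in $L_w^{d/(d+1-2s)}(\R^d)$ rather than quoting HLS directly, but the exponent bookkeeping ($\frac{1}{2q'}=\frac{1}{q}-\frac{2s-1}{d}$, giving $2q'=\frac{3d}{d+1-2s}$) is identical to yours.
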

\begin{proof}
To verify \er{06223}, we can utilize H\"{o}lder's inequality to get
\begin{align}\label{qqp1}
\left\| u |\nabla c|^2 \right\|_{L^1(\R^d)} \leq \|u\|_{L^q(\R^d)} \left\||\nabla
c|^2 \right\|_{L^{q'}(\R^d)},  ~~\frac{1}{q}+\frac{1}{q'}=1.
\end{align}
Besides, using the weak Young inequality \cite[formula (9), pp.
107]{lieb202} we continue to get
\begin{align}\label{0623}
\left\||\nabla c|^2\right\|_{L^{q'}(\R^d)} & =\|\nabla c\|_{L^{2q'}(\R^d)}^2=C
\left\|u(x)*\frac{x}{|x|^{d+2-2s}}\right\|_{L^{2q'}(\R^d)}^2 \nonumber \\
& \leq C
\|u\|_{L^{q}(\R^d)}^2 \left\|\frac{x}{|x|^{d+2-2s}}
\right\|_{L_w^{\frac{d}{d+1-2s}}(\R^d)}^2\le C\|u\|_{L^{q}(\R^d)}^2,
\end{align}
where $1+\frac{1}{2q'}=\frac{1}{q}+\frac{d+1-2s}{d}.$ Combining \er{qqp1} with \er{0623} follows \er{06223} with $q=\frac{3d}{d-2(1-2s)}$ and thus completes the proof. $\Box$
\end{proof}

Consequently, from \er{02011} one has that
\begin{align}
\int_{0}^T \left\| \sqrt{u} \nabla c \right\|_{L^2(\R^d)}^2 dt \leq
C\int_{0}^T \left\|u \right\|_{L^{3d/(d-2(1-2s))}(\R^d)}^3 dt<\infty.
\end{align}
So the term $\int_0^t \int_{\R^d} \left|\sqrt{u} \nabla c\right|^2 dx ds$ in \er{Fuequality} makes sense.

Now we are ready to state our main result.
\begin{theorem}\label{main}
Let $d \ge 3$, $\frac{2d}{d+2s}<m<2-\frac{2s}{d}$ and set $a=\frac{(d+2s)m-2d}{2d-2s-dm}.$ Suppose that $W$ satisfies \er{WW}. Under assumption \er{12} and
\begin{align}
\|u_0\|_{L^1(\R^d)}^a F(u_0)<\|W\|_{L^1(\R^d)}^a F(W)
\end{align}
on the initial condition.
\begin{enumerate}
  \item[\textbf{(i)}] If
  \begin{align}
  \|u_0\|_{L^1 (\R^d)}^a \|u_0\|_{L^m (\R^d)}^m< \|W\|_{L^1 (\R^d)}^a \|W\|_{L^m (\R^d)}^m,
  \end{align}
  then there exists a global weak solution to \er{fracks} satisfying that for any $0<t<\infty,$
     \begin{align}
      \|u(\cdot,t)\|_{L^1 \cap L^\infty(\R^d)} \le C\left( \|u_0\|_{L^1 \cap L^\infty(\R^d)} \right).
     \end{align}
     Furthermore, the weak solution is also a weak free energy solution satisfying the energy inequality \er{Fuequality}.
  \item[\textbf{(ii)}] If
  \begin{align}
  \|u_0\|_{L^1 (\R^d)}^a \|u_0\|_{L^m (\R^d)}^m > \|W\|_{L^1 (\R^d)}^a \|W\|_{L^m (\R^d)}^m,
  \end{align}
  then the weak solution of \er{fracks} blows up in finite time $T$ in the sense that
     \begin{align}
       \displaystyle \limsup_{t \to T} \|u(\cdot,t)\|_{L^\infty(\R^d)}=\infty.
     \end{align}
\end{enumerate}
\end{theorem}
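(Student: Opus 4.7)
My plan is to reduce both hypotheses to inequalities on the single real variable $y=\|u\|_{L^m}^m$, then use energy dissipation plus continuity to trap $y$ on one side of a threshold, from which I extract either a uniform $L^\infty$ bound (case (i)) or a strictly decaying virial (case (ii)).

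\emph{Step 1: one-variable reduction and identification of the threshold.} Because $M:=\|u(\cdot,t)\|_{L^1}$ is conserved, \er{VHLS} gives $F(u(t))\ge h(\|u(t)\|_{L^m}^m)$ where
\begin{align*}
h(y):=\frac{y}{m-1}-D\,y^{\alpha},\quad D:=\frac{c_{d,s}C_*}{2}M^{a_1},\quad a_1=\frac{(d+2s)m-2d}{d(m-1)},\quad \alpha=\frac{d-2s}{d(m-1)}.
\end{align*}
The assumption $\tfrac{2d}{d+2s}<m<2-\tfrac{2s}{d}$ forces $a_1>0$ and $\alpha>1$, so $h$ has a unique interior maximum at $y_*=y_*(M)$ determined by $h'(y_*)=0$. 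Using the Euler--Lagrange characterization \er{steadystates} together with the Pohozaev-type identity $\tfrac{d}{d\lambda}F(\lambda^d W(\lambda\,\cdot))|_{\lambda=1}=0$ one obtains $d\|W\|_{L^m}^m=\tfrac{c_{d,s}(d-2s)}{2}\iint W(x)W(y)/|x-y|^{d-2s}dxdy$; combined with the equality case \er{WW} this forces $\|W\|_{L^m}^m=y_*(\|W\|_{L^1})$ and $F(W)=h(y_*(\|W\|_{L^1}))$. A one-line scaling check using $a_1/(\alpha-1)=a$ shows that $M\mapsto M^a y_*(M)$ and $M\mapsto M^a h(y_*(M))$ are both independent of $M$, so the theorem's hypotheses translate exactly to
\begin{align*}
F(u_0)<h(y_*(M)),\qquad \|u_0\|_{L^m}^m\lessgtr y_*(M),\qquad M=\|u_0\|_{L^1}.
\end{align*}

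\emph{Step 2: case (i), global existence.} A local weak energy solution is built by the standard regularization (mollify $\mathcal{K}$, add viscosity $\varepsilon\Delta u$, pass to the limit via compactness and lower semicontinuity of $F$) so that \er{Fuequality} holds. Energy dissipation gives $F(u(t))\le F(u_0)<h(y_*)$, and the sublevel set $\{h\le F(u_0)\}$ splits as $[0,y_-]\cup[y_+,\infty)$ with $y_-<y_*<y_+$. By continuity in $t$ of $\|u(\cdot,t)\|_{L^m}^m$ together with the hypothesis $\|u_0\|_{L^m}^m<y_*$, $\|u(t)\|_{L^m}^m$ is trapped in $[0,y_-]$ on the existence interval. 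Interpolating this $L^m$-bound with the conserved $L^1$ norm and running an Alikakos/Moser iteration on $\int u^p dx$ produces a uniform $\|u(t)\|_{L^\infty}$ estimate, and the continuation criterion then yields global existence.

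\emph{Step 3: case (ii), blow-up by virial.} Under the opposite hypothesis the same trap gives $\|u(t)\|_{L^m}^m\ge y_+>y_*$ for as long as the solution exists. Testing \er{weak} against a truncation of $\psi(x)=|x|^2$ (justified by $\int|x|^2 u_0\,dx<\infty$) produces
\begin{align*}
\frac{d}{dt}\int_{\R^d}|x|^2 u\,dx=-\frac{2(2d-2s-dm)}{m-1}\|u\|_{L^m}^m+2(d-2s)F(u).
\end{align*}
Using $\|u\|_{L^m}^m\ge y_+$, $F(u)\le F(u_0)=h(y_+)$ and the identity $(d-2s)D y_*^{\alpha-1}=d$ (which is just $h'(y_*)=0$), the right-hand side is bounded by $2y_+[d-(d-2s)D y_+^{\alpha-1}]<0$ since $y_+>y_*$ and $\alpha>1$. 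Integrating, the nonnegative quantity $\int|x|^2u\,dx$ would hit zero in finite time, a contradiction, and the continuation criterion upgrades non-existence past this time to the stated $L^\infty$ blow-up.

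\emph{Main obstacle.} The threshold identification is essentially algebraic once one notices that $M^a F$ and $M^a\|u\|_{L^m}^m$ are the natural scaling-invariant quantities attached to $h$. The real technical burden is (a) constructing a weak solution for which \er{Fuequality} is actually available (careful regularization and compactness, with the $L^3(0,T;L^q)$ integrability \er{02011} playing the decisive role in controlling the drift term via Lemma \ref{uceps}), and (b) rigorously justifying the $\psi=|x|^2$ virial computation on such a weak solution via truncation, since \er{weak} only admits $\psi\in C_0^\infty(\R^d)$. Everything else is bookkeeping of scaling exponents.
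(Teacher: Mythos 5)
Your proposal is correct and follows essentially the same route as the paper: the variant HLS inequality reduces everything to a one-variable function of the scale-invariant quantity $\|u\|_{L^1(\R^d)}^a\|u\|_{L^m(\R^d)}^m$ (your $h(y)$ is the paper's $g(x)$ with the mass folded into the constant), energy monotonicity plus mass conservation traps this quantity on one side of the critical point exactly as in Proposition \ref{belowabove}, Moser iteration yields the $L^\infty$ bound and global existence in case (i), and the second-moment computation combined with the identity $(d-2s)D y_*^{\alpha-1}=d$ (equivalently the paper's relation between $F(W)\|W\|_1^a$ and $\|W\|_m^m\|W\|_1^a$ obtained from \er{06082} and \er{xstar}) gives the strictly negative virial and blow-up in case (ii). The only substantive variation is your identification of the threshold $\|W\|_{L^m(\R^d)}^m=y_*(\|W\|_{L^1(\R^d)})$ via the Pohozaev/dilation identity from the Euler--Lagrange equation \er{steadystates}, where the paper instead argues that $g$ attains its maximum at $\|W\|_{L^1(\R^d)}^a\|W\|_{L^m(\R^d)}^m$ directly from extremality in \er{VHLS}; both lead to the same relation \er{xstar}, and your derivation makes that step more explicit.
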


One thing is worth noting that the above results also hold true in the local setting $s=1$ for the lower dimensional case $d=2$ where Riesz potential is replaced by the log interaction. The energy critical exponent $\frac{2d}{d+2s}$ and mass critical exponent $2-\frac{2s}{d}$ coincides, i.e. $\frac{2d}{d+2s}=2-\frac{2s}{d}=1.$ In this case, the $L^1$ norm of the steady states is $8\pi$ and the global behaviors of solutions can be classified by the $L^1$ norm of the initial data. Namely, if $\|u_0\|_{L^1(\R^2)} < 8\pi,$ the solution of \er{fracks} exists globally in time, while the solution blows up in finite time for $\|u_0\|_{L^1(\R^2)} > 8\pi$ \cite{BCM08,bdp06}. Let us point out that in the nonlocal case $s>1,$ it is not straightforward to extend the above results to $d=2.$ In fact, there is a gap between $\frac{2d}{d+2s}$ and $2-\frac{2s}{d}$, i.e. $2-\frac{2s}{d}<\frac{2d}{d+2s}$, and the two dimensional degenerate problem is essentially new aspect.

The results are organised as follows. This work is entirely devoted to establish an exact criteria on the initial data for the global existence and finite time blow-up of solutions to \er{fracks}. With this aim we firstly construct the existence criterion which shows a key maximal existence time for the free energy solution of \er{fracks} in Section \ref{Existence}. Section \ref{MHLS} mainly explores the variational structure of the modified HLS inequality. Subsequently, Section \ref{Proofmain} makes use of the extremum function $W$ of the VHLS inequality to prove the main theorem concerning the dichotomy, the global existence for $\|u_0\|_{L^1 (\R^d)}^a \|u_0\|_{L^m (\R^d)}^m< \|W\|_{L^1 (\R^d)}^a \|W\|_{L^m (\R^d)}^m$ and finite time blow-up for $\|u_0\|_{L^1 (\R^d)}^a \|u_0\|_{L^m (\R^d)}^m >\|W\|_{L^1 (\R^d)}^a \|W\|_{L^m (\R^d)}^m$. Finally, Section \ref{conclu} concludes the main work of this paper.


\section{Existence criterion} \label{Existence}

Following \cite{BL13,BL14,suku06}, we consider the regularized problem
\begin{align}\label{kseps}
\left\{
  \begin{array}{ll}
    \partial_t u_\varepsilon=\Delta u^m_\varepsilon+\varepsilon \Delta u_\varepsilon-\nabla \cdot \left( u_\varepsilon \nabla c_\varepsilon \right),~~x \in \R^d,~t>0, \\
 u_\varepsilon(x,0)=u_{0\varepsilon} \ge 0,
  \end{array}
\right.
\end{align}
where $c_\varepsilon$ is defined as
\begin{align}
c_\varepsilon=R_\varepsilon \ast u_\varepsilon
\end{align}
with the regularized Riesz potential
\begin{align}
R_\varepsilon(x)=c_{d,s} \frac{1}{\left( |x|^2+\varepsilon^2  \right)^{\frac{d-2s}{2}}}.
\end{align}
Here $u_{0\varepsilon}$ is a sequence of approximation for $u_0$ and can be constructed to satisfy that there exists $\varepsilon_0>0$ such that for any $0<\varepsilon<\varepsilon_0,$
\begin{align}
\left\{
  \begin{array}{ll}
 u_{0\varepsilon} \ge 0, ~~ \|u_\varepsilon(x,0)\|_{L^1(\R^d)}=\|u_0\|_{L^1(\R^d)}, \\
 \int_{\R^d}|x|^2 u_{0\varepsilon} dx \to \int_{\R^d}|x|^2 u_0(x)dx,~~\mbox{as}~~\varepsilon \to 0, \\
 u_{0\varepsilon}(x) \to u_0(x)~~\mbox{in}~~L^q(\R^d),~~\mbox{for}~~1 \le q <\infty,~~\mbox{as}~~\varepsilon \to 0.
  \end{array}
\right.
\end{align}
The regularized problem has global in time smooth solutions for any $\varepsilon>0$. This approximation has been proved to be convergent. More precisely, following the arguments in \cite[Theorem 4.2]{BL14}, \cite[Lemma 4.8]{CHVY19} and \cite[Section 4]{suku06} we assert that if
\begin{align}\label{Linfinity}
\|u_\varepsilon(\cdot,t)\|_{L^\infty(\R^d)}<C_0,
\end{align}
where $C_0$ is independent of $\varepsilon,$ then there exists a subsequence $\varepsilon_n \to 0$ such that
\begin{align}
  \begin{array}{ll}
    u_{\varepsilon_n} \rightarrow u~~\mbox{in}~~L^r(0,T;L^r(\R^d)),~~1 \le r<\infty \label{conver1}
  \end{array}
\end{align}
and $u$ is a weak solution to \er{fracks} on $[0,T).$

According to the above analysis, a weak solution to \er{fracks} on $[0,T)$ exists when \er{Linfinity} is fulfilled. So we shall focus on establishing the availability of the $L^\infty$-bound. As we will see in the following theorem where the local in time existence and blow-up criteria are constructed, such a bound follows from the $L^r$ norm for $r>p=\frac{d(2-m)}{2s}$ which additionally provides a characterisation of the maximal existence time.

Before going to the proof, we firstly recall the HLS inequality \cite{lieb202}:
\begin{lemma}(HLS inequality)
Let $q,q_1>1, d \ge 3$ and $0<\beta<d$ with $\frac{1}{q}+\frac{1}{q_1}+\frac{\beta}{d}=2.$ Assume $f \in L^q(\R^d)$ and $h \in L^{q_1}(\R^d)$, then there exists a sharp constant $C(d,\beta,q)$ independent of $f$ and $h$ such that
\begin{align}\label{fh}
\left| \iint_{\R^d \times \R^d} \frac{f(x)h(y)}{|x-y|^{\beta}} dxdy   \right| \le C(d,\beta,q) \|f\|_{L^q(\R^d)} \|h\|_{L^{q_1}(\R^d)}.
\end{align}
If $q=q_1=\frac{2d}{2d-\beta},$ then
\begin{align}\label{CHLS}
C(d,\beta,q)=C(d,\beta)=\pi^{\beta/2} \frac{\Gamma\left( d/2-\beta/2 \right)}{\Gamma\left( d-\beta/2 \right)} \left( \frac{\Gamma(d/2)}{\Gamma(d)} \right)^{-1+\beta/d}.
\end{align}
\end{lemma}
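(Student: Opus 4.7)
The plan is to establish the inequality in two stages: first the general bound with a finite (possibly non-sharp) constant, then the identification of the sharp constant in the diagonal case.

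\textbf{Stage 1 (general bound).} I would first reduce to the case of radially symmetric decreasing functions via the Riesz rearrangement inequality, which gives $\iint f(x)h(y)|x-y|^{-\beta}\,dxdy \le \iint f^{*}(x)h^{*}(y)|x-y|^{-\beta}\,dxdy$ while preserving both Lebesgue norms. Next I would invoke Young's inequality for weak Lebesgue spaces, available in the Lieb--Loss reference already cited in the paper: since $|x|^{-\beta}$ lies in the weak space $L^{d/\beta,\infty}(\R^d)$, one has $\|(|\cdot|^{-\beta}) * h\|_{L^{q'}(\R^d)} \le C \|h\|_{L^{q_1}(\R^d)}$ provided $1+\tfrac{1}{q'} = \tfrac{1}{q_1}+\tfrac{\beta}{d}$, which by the hypothesis $\tfrac{1}{q}+\tfrac{1}{q_1}+\tfrac{\beta}{d}=2$ is exactly $\tfrac{1}{q}+\tfrac{1}{q'}=1$. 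H\"older's inequality then yields \eqref{fh} with a constant $C(d,\beta,q)$ depending only on the three stated parameters, and taking the infimum of admissible constants produces the sharp one.

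\textbf{Stage 2 (sharp constant in the symmetric case).} For $q=q_1=2d/(2d-\beta)$ I would follow Lieb's competing symmetries argument. The bilinear form is invariant under the full conformal group of $\R^d\cup\{\infty\}$---translations, dilations, and the inversion $x\mapsto x/|x|^{2}$ with appropriate Jacobian weighting on $f$ and $h$---so via stereographic projection one transfers the problem to the sphere $S^{d}$, where the kernel becomes a function of geodesic distance. Alternating two symmetrization operations, namely spherical rearrangement and the pull-back of Euclidean symmetric decreasing rearrangement, produces a sequence that converges to a function radially decreasing in both pictures. This forces the limit to be a conformal image of a constant on $S^{d}$, so the extremizer (unique up to translation, dilation, and positive scalar multiple) has the explicit form $W_{0}(x) = (1+|x|^{2})^{-(2d-\beta)/2}$. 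Substituting $W_{0}$ and computing the resulting integral on the sphere, or equivalently performing a Beta-function manipulation in $\R^d$, yields the closed-form constant \eqref{CHLS}.

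\textbf{Main obstacle.} The conformal group is non-compact, so a direct variational argument to produce a maximizer would have to rule out both concentration and escape-to-infinity---exactly the degeneracies generated by dilation and inversion. The competing-symmetries iteration avoids this issue by constructing an explicit convergent rearrangement sequence rather than relying on abstract Lions-type concentration-compactness, and this identification step is the technically central point of the proof. The remaining ingredients---Riesz rearrangement, the weak Young bound, the Jacobian bookkeeping for stereographic projection, and the Beta-function evaluation---are standard and appear in the Lieb--Loss text that the authors cite.
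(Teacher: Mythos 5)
The paper does not actually prove this lemma: it is recalled verbatim from Lieb--Loss \cite{lieb202} as a known result, so there is no internal argument to compare yours against. Your two-stage outline is a correct summary of the standard proof found in that reference. In Stage 1 the exponent bookkeeping checks out: since $|x|^{-\beta}\in L^{d/\beta}_{w}(\R^d)$, the weak Young inequality gives $\bigl\||\cdot|^{-\beta}*h\bigr\|_{L^{q'}}\le C\|h\|_{L^{q_1}}$ with $1+\tfrac{1}{q'}=\tfrac{1}{q_1}+\tfrac{\beta}{d}$, which under the hypothesis $\tfrac1q+\tfrac1{q_1}+\tfrac\beta d=2$ is exactly the H\"older duality $\tfrac1q+\tfrac1{q'}=1$, and the assumptions $q,q_1>1$, $0<\beta<d$ guarantee $1<q_1<q'<\infty$ as weak Young requires; the initial Riesz rearrangement is not needed for this non-sharp bound, though it is harmless and becomes essential in Stage 2. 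In Stage 2 you correctly identify the diagonal extremizer $(1+|x|^2)^{-(2d-\beta)/2}$ and the value \eqref{CHLS}, and the competing-symmetries route (Carlen--Loss, also presented in the cited text) is a legitimate way to obtain it. The only caveat is that the technically central step of Stage 2 --- the convergence of the alternating symmetrization sequence to the conformal image of a constant on $S^d$ --- is asserted rather than carried out; since the lemma is a classical cited result this is an acceptable level of detail, but it is the one place where your sketch would need genuine work to become a complete proof.
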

Now we are able to show the local existence results.
\begin{theorem}(Local in time existence and blow-up criteria)\label{ueps}
Under assumption \er{12} on the initial condition, there are a maximal existence time $T_w \in (0,\infty]$ and a weak solution $u$ to \er{fracks} on $[0,T_w)$. If $T_w<\infty$, then
\begin{align}\label{criterion}
\|u(\cdot,t)\|_{L^\infty(\R^d)} \to \infty~~\mbox{as}~~t \to T_w.
\end{align}
\end{theorem}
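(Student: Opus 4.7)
The plan is to construct the weak solution as the $\varepsilon\to 0$ limit of the smooth global solutions $u_\varepsilon$ to the regularized problem \er{kseps}, and to identify $T_w$ as the largest time on which I can prove a uniform-in-$\varepsilon$ estimate \er{Linfinity}. The compactness statement \er{conver1} borrowed from \cite{BL14,CHVY19,suku06} then produces a weak solution on $[0,T_w)$. The whole argument rests on controlling $\|u_\varepsilon(\cdot,t)\|_{L^r}$ for some $r$ slightly above the scale-invariant exponent $p=d(2-m)/(2s)$. The role of $r>p$ mirrors the scaling heuristic recalled in the introduction: for the supercritical regime $1<m<2-\tfrac{2s}{d}$ the aggregation term is supercritical against the $L^p$ norm, whereas for $r>p$ the nonlinear dissipation combined with the conserved mass $M=\|u_0\|_{L^1}$ is strong enough to absorb it over a short time interval.

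Concretely, I test \er{kseps} against $r u_\varepsilon^{r-1}$ and integrate by parts to obtain, after dropping the non-negative $\varepsilon$-viscous contribution,
\[
\frac{d}{dt}\|u_\varepsilon\|_{L^r}^r+\frac{4mr(r-1)}{(r+m-1)^2}\|\na u_\varepsilon^{(r+m-1)/2}\|_{L^2}^2\le (r-1)\int_{\R^d}\na u_\varepsilon^r\cdot\na c_\varepsilon\,dx.
\]
The aggregation term is controlled by H\"older's inequality together with the weak Young bound for $\na c_\varepsilon$ in the spirit of Lemma \ref{uceps} (this is uniform in $\varepsilon$ because $R_\varepsilon$ is pointwise dominated by the Riesz kernel); the resulting high-$L^q$ norms of $u_\varepsilon$ are then interpolated between $\|u_\varepsilon\|_{L^1}=M$ and $\|u_\varepsilon^{(r+m-1)/2}\|_{L^{2d/(d-2)}}$ via Sobolev embedding. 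Young's inequality absorbs a fraction of the dissipation into the left-hand side, and the hypothesis $r>p$ is exactly what forces the remaining power of $\|u_\varepsilon\|_{L^r}$ to strictly exceed $r$, yielding
\[
\frac{d}{dt}\|u_\varepsilon(\cdot,t)\|_{L^r}^r\le C\,\|u_\varepsilon(\cdot,t)\|_{L^r}^{r(1+\si)},\qquad \si=\si(r,m,s,d)>0,
\]
uniformly in $\varepsilon$. Comparison with the associated scalar ODE then furnishes a time $T_*=T_*(\|u_0\|_{L^r},M)>0$ on which $\|u_\varepsilon(\cdot,t)\|_{L^r}$ remains bounded independently of $\varepsilon$.

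Next, a Moser/Alikakos iteration starting from this $L^r$ bound upgrades it to $L^\infty$: repeating the testing with an increasing sequence $r_k\to\infty$ and tracking the Sobolev constants carefully at each step produces $\|u_\varepsilon(\cdot,t)\|_{L^\infty}\le C(t)$ uniformly in $\varepsilon$ on every compact subinterval of $[0,T_w)$, where $T_w$ is defined as the supremum of times for which the $L^r$ estimate persists for some (equivalently, every) $r>p$. With \er{Linfinity} in hand, the convergence \er{conver1} hands me a weak solution $u$ on $[0,T_w)$. The blow-up criterion then follows by maximality: if $T_w<\infty$ and $\limsup_{t\to T_w}\|u(\cdot,t)\|_{L^\infty}<\infty$, then $\|u(\cdot,t)\|_{L^r}$ in particular remains bounded up to $T_w$, and restarting the ODE argument from time $T_w-\delta$ with small $\delta$ produces a strict extension of the solution past $T_w$, contradicting the definition of $T_w$; hence \er{criterion} must hold.

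The main technical obstacle is the bookkeeping hidden in the second paragraph: one must verify that the weak-Young estimate for $\na c_\varepsilon$, combined with Gagliardo--Nirenberg interpolation involving $u_\varepsilon^{(r+m-1)/2}$ and the conserved $L^1$ mass, really does produce an exponent strictly greater than $r$ on the right-hand side for every $r>p=d(2-m)/(2s)$, and moreover that the constant $C$ is independent of $\varepsilon$. This is the precise juncture at which the supercritical hypothesis $m<2-\tfrac{2s}{d}$ is used, and the local existence time $T_*$ genuinely depends on $\|u_0\|_{L^r}$ (not just on $M$), in contrast to the subcritical regime.
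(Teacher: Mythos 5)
Your overall architecture coincides with the paper's: regularize as in \er{kseps}, test with $ru_\varepsilon^{r-1}$, derive a Riccati-type differential inequality for $\|u_\varepsilon\|_{L^r}^r$ with $r>p$, upgrade to $L^\infty$ by Moser iteration, pass to the limit by compactness, and characterize $T_w$ by maximality. The genuine gap is in the interpolation step. You propose to interpolate the aggregation term between the conserved mass $\|u_\varepsilon\|_{L^1}=M$ and the dissipation $\|\nabla u_\varepsilon^{(m+r-1)/2}\|_{L^2}$ (via Sobolev embedding of $u_\varepsilon^{(m+r-1)/2}$). Run the exponent count: in the Gagliardo--Nirenberg step \er{230129} with lower endpoint $L^{q_0(m+r-1)/2}$, the dissipation enters with the power
\begin{align*}
a\theta=2\left(1+\frac{\frac{2-m}{(m+r-1)q_0}-\frac{s}{d}}{\frac{1}{q_0}-\frac{d-2}{2d}}\right),
\end{align*}
and the absorption condition $a\theta<2$ holds precisely when $q_0\frac{m+r-1}{2}>p=\frac{d(2-m)}{2s}$. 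Your choice corresponds to $q_0\frac{m+r-1}{2}=1$, which requires $p<1$, i.e.\ $m>2-\frac{2s}{d}$ --- the subcritical regime. In the regime of the theorem one gets $a\theta>2$, Young's inequality goes the wrong way, the dissipation cannot absorb the aggregation term, and no closed differential inequality results. This is exactly the scaling obstruction recalled in the introduction: the mass is too weak a quantity to control the aggregation when $m<2-\frac{2s}{d}$.

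The paper avoids this by taking the lower endpoint to be $L^{q_0(m+r-1)/2}$ with $q_0\frac{m+r-1}{2}=r>p$ (see \er{0131}), which makes $a\theta<2$ and leaves the superlinear remainder $C\left(\|u_\varepsilon\|_{L^r(\R^d)}^r\right)^{1+\delta}$ producing the local-in-time bound \er{Lr}. Note that your write-up is internally inconsistent on this very point: you name $L^1$ as the interpolation endpoint yet assert that the leftover after Young is a power of $\|u_\varepsilon\|_{L^r}$ strictly exceeding $r$; the latter is what must happen, but only with the paper's choice of endpoint (with the $L^1$ endpoint the leftover would be a power of $M$, yielding a bound global in time --- incompatible with the finite-time blow-up established later). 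The way you bound the aggregation term itself --- H\"older plus the weak Young estimate for $\nabla c_\varepsilon$, rather than the paper's symmetrized double integral and HLS with kernel $|x-y|^{-(d+2-2s)}$ exploiting $2s>2$ --- is a harmless cosmetic difference; the defect is solely the choice of interpolation endpoint.
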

\begin{proof}
The main task in proving the local existence is to get the following a priori estimates.

{\it\textbf{Step 1}} ($L^r$-estimates, $r \in (p,\infty)$) \quad It's obtained by multiplying equation \er{kseps} with $ru_\varepsilon^{r-1}$ that
\begin{align}\label{230127}
& \frac{d}{dt} \int_{\R^d} u_\varepsilon^r dx+\frac{4mr(r-1)}{(m+r-1)^2} \int_{\R^d} \left| \nabla u_\varepsilon^{\frac{m+r-1}{2}} \right|^2 dx+\varepsilon \frac{4(r-1)}{r} \int_{\R^d} \left| \nabla u_\varepsilon^{\frac{r}{2}} \right|^2 dx \nonumber \\
=& c_{d,s}(d-2s)(2s-2)(r-1) \iint_{\R^d \times \R^d} \frac{u_\varepsilon^r(x) u_\varepsilon(y)}{\left( |x-y|^2+\varepsilon^2 \right)^{\frac{d+2-2s}{2}}} dxdy.
\end{align}
Because of $2s>2,$ we handle the right hand side of \er{230127} by applying the HLS inequality \er{fh} with $f=u_\varepsilon^r(x)$ and $h=u_\varepsilon(y)$ such that
\begin{align}\label{cdsr}
\iint_{\R^d \times \R^d} \frac{u_\varepsilon^r(x) u_\varepsilon(y)}{\left( |x-y|^2+\varepsilon^2 \right)^{\frac{d+2-2s}{2}}} dxdy & \le \iint_{\R^d \times \R^d} \frac{u_\varepsilon^r(x) u_\varepsilon(y)}{|x-y|^{d+2-2s}} dxdy \nonumber \\
& \le C_{dsr} \|u_\varepsilon^r\|_{L^{q}(\R^d)} \|u_\varepsilon\|_{L^{q_1}(\R^d)} = C_{dsr} \left\|u_\varepsilon \right\|_{L^{\frac{r+1}{1+(2s-2)/d}}(\R^d)}^{r+1},
\end{align}
where $\frac{1}{q}+\frac{1}{q_1}+\frac{d+2-2s}{d}=2$ and we have used $q_1=rq=\frac{r+1}{1+(2s-2)/d}$, $C_{dsr}$ is a bounded constant depending on $d,s,r$. Furthermore, for
$$
1 \le q_0<r_0<\frac{2d}{d-2},
$$
we have the following GNS inequality \cite{bp07}: there exists a positive constant $C$ such that
\begin{align}
\|v\|_{L^{r_0}(\R^d)} \le C \|\nabla v\|_{L^{2}(\R^d)}^\theta \|v\|_{L^{q_0}(\R^d)}^{1-\theta},\quad \theta=\frac{\frac{1}{q_0}-\frac{1}{r_0}}{\frac{1}{q_0}-\frac{d-2}{2d} },
\end{align}
which we apply with $v=u_\varepsilon^{\frac{m+r-1}{2}}$ to discover
\begin{align}\label{230129}
\left\|u_\varepsilon \right\|_{L^{\frac{r+1}{1+(2s-2)/d}}(\R^d)}^{r+1} \le C \left\|\nabla u_\varepsilon^{\frac{m+r-1}{2}} \right\|_{L^2(\R^d)}^{a \theta} \|u_\varepsilon\|_{L^{q_0 (m+r-1)/2}(\R^d)}^{a(1-\theta)\frac{m+r-1}{2}},
\end{align}
where the parameters satisfy
\begin{align*}
a \frac{m+r-1}{2}=r+1, \quad r_0 \frac{m+r-1}{2}=\frac{r+1}{1+(2s-2)/d}.
\end{align*}
Here we pick
\begin{align*}
q_0 \frac{m+r-1}{2}>p=\frac{d(2-m)}{2s}
\end{align*}
and simple computations show that
\begin{align*}
a \theta=2 \left( 1+ \frac{ \frac{2-m}{(m+r-1)q_0}-\frac{2s}{2d}}{ \frac{1}{q_0}-\frac{d-2}{2d} } \right)<2
\end{align*}
in case of $1 < m < 2-2s/d$. So using Young's inequality we can infer from \er{230129} that
\begin{align}\label{230610}
\left\|u_\varepsilon \right\|_{L^{\frac{r+1}{1+(2s-2)/d}}(\R^d)}^{r+1} \le \frac{2mr}{c_{d,s}(d-2s) C_{dsr} (2s-2) (m+r-1)^2} \left\|\nabla u_\varepsilon^{\frac{m+r-1}{2}} \right\|_{L^2(\R^d)}^2 +C \|u_\varepsilon\|_{L^{q_0 (m+r-1)/2}(\R^d)}^{(r+1)(1-\theta)/(1-a\theta/2)},
\end{align}
where the constant $C_{dsr}$ in \er{cdsr} is used and
\begin{align}
\frac{(r+1)(1-\theta)}{1-a\theta/2}=\frac{2s-(2-m)(2s-2)+2sr-d(2-m)}{2s-\frac{2d(2-m)}{q_0 (m+r-1)}}>q_0 \frac{m+r-1}{2}
\end{align}
in case of
$$\frac{d(2-m)}{2s}<q_0 \frac{m+r-1}{2} \le r.$$
Particularly, taking
$$
q_0 \frac{m+r-1}{2}=r
$$
\er{230610} also reads
\begin{align}\label{0131}
c_{d,s}(d-2s) (2s-2)C_{dsr}(r-1) \|u_\varepsilon\|_{L^{\frac{r+1}{1+(2s-2)/d}}(\R^d)}^{r+1} \le & \frac{2mr(r-1)}{(m+r-1)^2} \left\|\nabla u_\varepsilon^{\frac{m+r-1}{2}} \right\|_{L^2(\R^d)}^2 \nonumber \\
& + C \left( \|u_\varepsilon\|_{L^{r}(\R^d)}^{r} \right)^{1+\frac{\frac{2s}{d}-(2-m)\frac{2s-2}{d}}{\frac{2s}{d}r-(2-m)}}
\end{align}
for any $r>\frac{d(2-m)}{2s}$. Therefore, substituting \er{cdsr} and \er{0131} into \er{230127} we thus end up with
\begin{align}\label{230131}
& \frac{d}{dt} \|u_\varepsilon \|_{L^r(\R^d)}^r +\frac{2mr(r-1)}{(m+r-1)^2} \int_{\R^d} \left| \nabla u_\varepsilon^{\frac{m+r-1}{2}} \right|^2 dx+\varepsilon \frac{4(r-1)}{r} \int_{\R^d} \left| \nabla u_\varepsilon^{r/2} \right|^2 dx \nonumber \\
\le & C \left( \|u_\varepsilon\|_{L^{r}(\R^d)}^{r} \right)^{1+\frac{\frac{2s}{d}-(2-m)\frac{2s-2}{d}}{\frac{2s}{d}r-(2-m)}}.
\end{align}
Hence the local in time $L^r$-estimates is obtained
\begin{align}\label{Lr}
\|u_\varepsilon (\cdot,t)\|_{L^r(\R^d)}^r & \le \frac{1}{\left( \|u_\varepsilon(0) \|_{L^r(\R^d)}^{-r \delta}-\delta C t \right)^{\frac{1}{\delta}}}\nonumber \\
&= \left(\frac{C(\delta)}{T_0-t}\right)^{\frac{1}{\delta}},\qquad T_0=\frac{\|u_\varepsilon(0) \|_{L^r(\R^d)}^{-r \delta}}{\delta C},
\end{align}
where $\delta=\frac{1-\frac{(2-m)(2s-2)}{2s}}{r-\frac{d(2-m)}{2s}}>0$ since $r>\frac{d(2-m)}{2s}$ and $1 < m<2-\frac{2s}{d}.$ Particularly, because $m>\frac{2d}{d+2s}$ implies $m>p=\frac{d(2-m)}{2s}$, the local existence in $L^m$ norm also holds true by letting $r=m$ in \er{Lr}. Furthermore, coming back to \er{230131}, we deduce that
\begin{align}\label{nablaLr}
\left\|\nabla u_\varepsilon^{\frac{m+r-1}{2}} \right\|_{L^2(0,T_0;L^2(\R^d))} \le C\left( \|u_\varepsilon(0)\|_{L^r(\R^d)} \right).
\end{align}

{\it\textbf{Step 2}} ($L^\infty$-estimates)\quad As a direct result of Step 1 with Moser iterative method, we have
\begin{align}\label{Linfinity23}
\displaystyle \sup_{0<t<T_0} \|u_\varepsilon(\cdot,t)\|_{L^\infty(\R^d)} \le C\left( \|u_\varepsilon(0)\|_{L^1(\R^d)},\|u_\varepsilon(0)\|_{L^\infty(\R^d)} \right)
\end{align}
by a word for word translation of the proof for \cite[Theorem 4.2]{BL14}. Armed with regularities \er{nablaLr} and \er{Linfinity23} it directly follows
\begin{align}
\|\nabla u_\varepsilon \|_{L^2(0,T_0;L^2(\R^d))} \le C, \\
\|u_\varepsilon \nabla c_\varepsilon \|_{L^\infty(0,T_0;L^\infty(\R^d))} \le C.
\end{align}
Here $C$ are constants depending only on $\|u_\varepsilon(0)\|_{L^1(\R^d)}$ and $\|u_\varepsilon(0)\|_{L^\infty(\R^d)}$. As a consequence, the a priori bounds in the theorem hold true uniformly in $\varepsilon$ and thus we can pass to the limit $u_\varepsilon \to u$ as $\varepsilon \to 0$ (without relabeling) because of the Lions-Aubin lemma which provides time compactness. In the limit, we obtain the local existence of the weak solution to \er{fracks}. Finally, in the light of the proof for \cite[Theorem 2.4]{BJ09} we characterise the maximal existence time of the weak solution and complete the proof. $\Box$
\end{proof}

Next we may proceed to show that the weak solution is also a free energy solution satisfying $F(u(\cdot,t)) \le F(u_0)$ for $t \in [0,T_w).$

\begin{proposition}(Existence of the free energy solution)\label{energy}
Under assumption \er{12} on the initial data and $u_\varepsilon(\cdot,t) \in L^\infty \left(0,T_w;L_+^1 \cap L^\infty(\R^d) \right)$ on the approximated sequence, the weak solution of \er{fracks} is also a free energy solution satisfying the energy inequality \er{Fuequality}.
\end{proposition}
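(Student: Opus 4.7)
The plan is to start from the regularized equation \er{kseps}, derive a free energy identity for the smooth approximations $u_\varepsilon$, and then pass to the limit $\varepsilon\to 0$ by combining uniform a priori bounds with weak lower semicontinuity of the dissipation. Concretely, set $\mu_\varepsilon := \frac{m}{m-1}u_\varepsilon^{m-1}-c_\varepsilon$ and introduce the regularized free energy
\begin{align*}
F_\varepsilon(u_\varepsilon) = \frac{1}{m-1}\int_{\R^d} u_\varepsilon^m\,dx - \frac{1}{2}\iint_{\R^d\times\R^d} R_\varepsilon(x-y)\,u_\varepsilon(x)u_\varepsilon(y)\,dxdy.
\end{align*}
Multiplying \er{kseps} by $\mu_\varepsilon$, integrating by parts, and using $\nabla u_\varepsilon^m = u_\varepsilon\nabla\bigl(\tfrac{m}{m-1}u_\varepsilon^{m-1}\bigr)$ to rewrite the porous-medium flux, one computes the identity
\begin{align*}
\frac{d}{dt}F_\varepsilon(u_\varepsilon) + \int_{\R^d}\left|\frac{2m}{2m-1}\nabla u_\varepsilon^{m-1/2} - \sqrt{u_\varepsilon}\,\nabla c_\varepsilon\right|^2 dx + \varepsilon m\int_{\R^d} u_\varepsilon^{m-2}|\nabla u_\varepsilon|^2\,dx = \varepsilon \int_{\R^d} \nabla u_\varepsilon \cdot \nabla c_\varepsilon\,dx,
\end{align*}
after recognizing $u_\varepsilon|\nabla\mu_\varepsilon|^2$ in the $\sqrt{u_\varepsilon}$-form of \er{Fuequality}. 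Integrating in time and discarding the nonnegative $\varepsilon m$-term produces a regularized analogue of \er{Fuequality} with an $\varepsilon$-dependent remainder on the right.

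\textbf{Uniform bounds and passage to the limit.} The assumption $m > \frac{2d}{d+2s}$ is equivalent to $m > p = \frac{d(2-m)}{2s}$, so Theorem \ref{ueps} applied with $r = m$ yields a uniform-in-$\varepsilon$ bound on $\|\nabla u_\varepsilon^{m-1/2}\|_{L^2(0,T;L^2(\R^d))}$, which in the limit establishes \er{0201}. The uniform control of $u_\varepsilon$ in $L^\infty(0,T;L^1\cap L^\infty(\R^d))$ together with interpolation gives a uniform bound in $L^3(0,T;L^{3d/(d-2+4s)}(\R^d))$, hence \er{02011}, and via Lemma \ref{uceps} a uniform bound on $\|\sqrt{u_\varepsilon}\nabla c_\varepsilon\|_{L^2(0,T;L^2)}$. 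The strong convergence $u_\varepsilon\to u$ in every $L^r$, $r\in[1,\infty)$, supplied by \er{conver1}, combined with the uniform $L^\infty$-bound, yields $u_\varepsilon^{m-1/2}\to u^{m-1/2}$ strongly in $L^r$, whence $\nabla u_\varepsilon^{m-1/2}\rightharpoonup \nabla u^{m-1/2}$ weakly in $L^2(0,T;L^2)$; analogously, the convolution convergence $R_\varepsilon\to\mathcal K$ together with strong convergence of $u_\varepsilon$ and the weak Young estimate in \er{0623} delivers $\nabla c_\varepsilon\to\nabla c$ in a strong enough sense that, paired with a.e.\ convergence of $\sqrt{u_\varepsilon}$, one obtains $\sqrt{u_\varepsilon}\nabla c_\varepsilon\rightharpoonup \sqrt{u}\nabla c$ weakly in $L^2(0,T;L^2)$. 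Weak lower semicontinuity of the $L^2$-norm applied to the difference bounds the dissipation from below, while strong $L^m$ convergence together with the HLS-type bound \er{VHLS} gives $F_\varepsilon(u_\varepsilon(t))\to F(u(t))$ and $F_\varepsilon(u_{0\varepsilon})\to F(u_0)$. The remainder $\varepsilon\int_0^t\!\int \nabla u_\varepsilon\cdot \nabla c_\varepsilon\,dxds$ vanishes because $\|\nabla u_\varepsilon\|_{L^2(0,T;L^2)}$ is uniformly bounded by Theorem \ref{ueps} and $\|\nabla c_\varepsilon\|_{L^\infty(0,T;L^2)}$ is controlled via weak Young. Assembling these pieces yields \er{Fuequality}.

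\textbf{Main obstacle.} The most delicate point is the passage to the limit in the cross term $\sqrt{u_\varepsilon}\nabla c_\varepsilon$: one must upgrade the convolution convergence of the regularized kernel into strong convergence of $\nabla c_\varepsilon$ in a Lebesgue space that pairs with $\sqrt{u_\varepsilon}$, then exploit a.e.\ convergence of $\sqrt{u_\varepsilon}$ and the uniform $L^2(L^2)$-bound to identify the weak limit of the product, all without relying on any compactness of $\nabla u_\varepsilon^{m-1/2}$ beyond weak $L^2$. A secondary subtlety is that the final statement is only a one-sided inequality rather than an equality, which is unavoidable since weak $L^2$ convergence of the dissipation integrand does not preserve its norm; this is precisely consistent with the notion of free energy solution in Definition \ref{weakdefine}(ii).
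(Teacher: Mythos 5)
Your overall strategy is the same as the paper's: multiply the regularized equation by $\mu_\varepsilon=\frac{m}{m-1}u_\varepsilon^{m-1}-c_\varepsilon$ to get the regularized energy identity with dissipation written as $\bigl|\frac{2m}{2m-1}\nabla u_\varepsilon^{m-1/2}-\sqrt{u_\varepsilon}\nabla c_\varepsilon\bigr|^2$ plus an $O(\varepsilon)$ remainder, use the uniform $L^\infty_t(L^1\cap L^\infty)$ and $\nabla u_\varepsilon^{(m+r-1)/2}\in L^2_tL^2_x$ bounds from the local theory, pass to the limit in the energy by the strong $L^r$ convergence \er{conver1} and in the dissipation by weak lower semicontinuity, and conclude the one-sided inequality \er{Fuequality}. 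That is exactly the architecture of the paper's proof (which delegates the lower semicontinuity step to \cite[Theorem 2.11]{BL13}).

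There is, however, one concrete flaw: your disposal of the remainder $\varepsilon\int_0^t\!\int\nabla u_\varepsilon\cdot\nabla c_\varepsilon\,dx\,ds$ relies on a uniform bound for $\|\nabla c_\varepsilon\|_{L^\infty(0,T;L^2(\R^d))}$ "via weak Young". Since $\nabla c_\varepsilon\sim u_\varepsilon\ast\frac{x}{|x|^{d+2-2s}}$, the weak Young inequality with target exponent $2$ forces $\|u_\varepsilon\|_{L^p}$ with $\frac1p=\frac{d+4s-2}{2d}$, which requires $4s<d+2$; under the standing assumption $2<2s<d$ this fails whenever $2s\ge\frac{d+2}{2}$ (e.g.\ $d=3$, $2s\in[2.5,3)$), and indeed for compactly supported $u$ the tail $|\nabla c|\sim|x|^{-(d+1-2s)}$ is not square-integrable at infinity in that range, so $\nabla c\notin L^2(\R^d)$ and the claimed bound is simply false there. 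The step is easily repaired: integrate by parts once more (or symmetrize, as the paper does) to rewrite the remainder as $\varepsilon\, c_{d,s}(d-2s)(2s-2)\iint u_\varepsilon(x)u_\varepsilon(y)\bigl(|x-y|^2+\varepsilon^2\bigr)^{-(d+2-2s)/2}dxdy$, which the HLS inequality bounds by $C\|u_\varepsilon\|_{L^{2d/(d-2+2s)}}^2$, uniformly in $\varepsilon$ thanks to the $L^1\cap L^\infty$ bound; the prefactor $\varepsilon$ then kills it in the limit. With that substitution your argument goes through for the full range of parameters.
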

\begin{proof}
Multiplying $\mu_\varepsilon=\frac{m}{m-1}u_\varepsilon^{m-1}-c_\varepsilon$ to the regularized equation \er{kseps} leads to
\begin{align}\label{230202}
& \frac{d}{dt}F\left(u_\varepsilon(\cdot,t)\right)+\int_{\R^d} \left
| \frac{2m}{2m-1}\nabla
u_\varepsilon^{m-\frac{1}{2}}-\sqrt{u_\varepsilon}\nabla
c_\varepsilon \right |^2 dx +\frac{4\varepsilon}{m} \int_{\R^d}
\left|\nabla u_\varepsilon^{m/2}\right|^2 dx \nonumber \\
=& \varepsilon c_{d,s}(d-2s) (2s-2) \iint_{\R^d \times \R^d} \frac{u_\varepsilon(x)u_\varepsilon(y)}{\left( |x-y|^2+\varepsilon^2 \right)^{\frac{d+2-2s}{2}}} dxdy
\end{align}
for any $t \in [0,T_w).$ Integrating \er{230202} in time from $0$ to $t$ one obtains
\begin{align}\label{doublestar}
& F\left[u_\varepsilon(\cdot,t)\right]+\int_0^t \int_{\R^d} \left
| \frac{2m}{2m-1}\nabla
u_\varepsilon^{m-\frac{1}{2}}-\sqrt{u_\varepsilon}\nabla
c_\varepsilon \right |^2 dxds \nonumber \\
\le & F\left[u_\varepsilon(0)\right]+\varepsilon c_{d,s}(d-2s) (2s-2) \int_0^t \iint_{\R^d \times \R^d} \frac{u_\varepsilon(x)u_\varepsilon(y)}{\left( |x-y|^2+\varepsilon^2 \right)^{\frac{d+2-2s}{2}}} dxdyds.
\end{align}
From \er{nablaLr} and \er{Linfinity23} we deduce that for any $t \in [0,T_w)$, the following basic estimates hold true:
\begin{align}
&\|u_\varepsilon\|_{L^\infty \left(0,t; L_+^1 \cap L^\infty(\R^d)\right)} \le C, \label{1234} \\
&\left\|\nabla u_\varepsilon^{\frac{m+r-1}{2}} \right\|_{L^2(0,t; L^2(\R^d))} \le C,~~\mbox{for}~~1 \le r<\infty. \label{12345}
\end{align}
Here $C$ represent constants depending only on $\|u_\varepsilon(0)\|_{L^1 \cap L^\infty(\R^d)}$. We shall use \er{1234} and \er{12345} to pass to the limit $\varepsilon \to 0$ in \er{doublestar}.

Firstly, the term in the right hand side of \er{doublestar} is bounded with the help of \er{1234} and the HLS inequality \er{fh} that
\begin{align}\label{230207}
\int_0^t \iint_{\R^d \times \R^d} \frac{u_\varepsilon(x)u_\varepsilon(y)}{\left( |x-y|^2+\varepsilon^2 \right)^{\frac{d+2-2s}{2}}} dxdyds \le C \int_0^t \|u_\varepsilon \|_{L^{2d/(d-2+2s)}(\R^d)}^2 ds \le C
\end{align}
for any $t \in [0,T_w).$

Secondly, the dissipation term is uniformly bounded
\begin{align}
\int_0^{t} \int_{\R^d} \left
| \frac{2m}{2m-1}\nabla
u_\varepsilon^{m-\frac{1}{2}}-\sqrt{u_\varepsilon}\nabla
c_\varepsilon \right |^2 dx ds \le C.
\end{align}
Actually, taking $r=m$ in \er{12345} yields $\left\|\nabla u_\varepsilon^{m-1/2} \right\|_{L^2(0,t;L^2(\R^d))} \le C$. Besides, by Lemma \ref{uceps} and the bound \er{1234} we find
\begin{align}
\int_0^t \left\| u_\varepsilon |\nabla c_\varepsilon|^2 \right\|_{L^1(\R^d)} ds \le C \int_0^t \|u_\varepsilon\|_{L^{\frac{3d}{d-2(1-2s)}}(\R^d)}^3 ds \le C.
\end{align}
Now by a restatement of the proof for \cite[Theorem 2.11]{BL13}, it is straightforward from the convergence properties \er{conver1} to pass to the limit $\varepsilon \to 0$ (without relabeling) and thus obtain
the lower semi-continuity of the dissipation term
\begin{align}\label{dissipationae}
& \int_{0}^t \int_{\R^d} \left|\frac{2m}{2m-1}\nabla
u^{m-1/2}-\sqrt{u}\nabla c \right|^2 dx ds \nonumber \\
\le & \displaystyle
\liminf_{\varepsilon \to 0} \int_{0}^t \int_{\R^d}
\left|\frac{2m}{2m-1}\nabla
u_\varepsilon^{m-1/2}-\sqrt{u_\varepsilon}\nabla c_\varepsilon
\right|^2 dxds.
\end{align}

Finally, the convergence of the free energy can be directly derived from \er{conver1} that
\begin{align}\label{Fstrong}
& F(u_\varepsilon(\cdot,t))= \frac{1}{m-1} \int_{\R^d} u_\varepsilon^m dx-\frac{c_{d,s}}{2} \iint_{\R^d \times \R^d} \frac{u_\varepsilon(x,t)u_\varepsilon(y,t)}{|x-y|^{d-2s}} dxdy \to \nonumber \\
& \frac{1}{m-1} \int_{\R^d} u^m dx-\frac{c_{d,s}}{2} \iint_{\R^d \times \R^d} \frac{u(x,t)u(y,t)}{|x-y|^{d-2s}} dxdy=F(u(\cdot,t))~~\mbox{a.e.~in}~(0,T_w).
\end{align}
Hence the evidences \er{230207}, \er{dissipationae} and \er{Fstrong} allow us to let $\varepsilon \to 0$ in \er{doublestar} such that
\begin{align}
&F \left[ u(\cdot,t) \right]+ \int_{0}^t \int_{\R^d}\left |
\frac{2m}{2m-1}\nabla u^{m-\frac{1}{2}}-\sqrt{u}\nabla c \right
|^2dxds \nonumber\le F[u_0(x)],  \quad a.e. ~~t \in (0,T_w).
\end{align}
Thus $u$ is a free energy solution and the free energy inequality \er{Fuequality} holds. $\Box$
\end{proof}

\section{A variant to the Hardy-Littlewood-Sobolev inequality} \label{MHLS}

In this section, we are going to show the existence of the extremal function for the modified HLS inequality \er{VHLS} which will be used to prove dynamical behaviors of solutions to \er{fracks}. Here several steps are required to the proof which borrows some arguments from \cite{BJ09}. Firstly, we establish a variant to the HLS inequality.
\begin{lemma}\label{Cstar}
Let $\frac{2d}{d+2s}<m<2-\frac{2s}{d}.$ For $u \in L_+^1\cap L^m(\R^d),$ we put
\begin{align}\label{hu}
h(u):=\iint_{\R^d \times \R^d} \frac{u(x)u(y)}{|x-y|^{d-2s}} dxdy.
\end{align}
Then
\begin{align}\label{max}
C_*:=\sup_{u \neq 0}  \frac{h(u)}{\|u\|_{L^1(\R^d)}^{\frac{(d+2s)m-2d}{d(m-1)}} \|u\|_{L^m(\R^d)}^{\frac{m(d-2s)}{d(m-1)}}} < \infty.
\end{align}
\end{lemma}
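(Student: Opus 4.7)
The plan is to reduce the variant inequality \er{VHLS} to the classical HLS inequality \er{fh} by an interpolation between $L^1$ and $L^m$. First, I would apply the classical HLS inequality \er{fh} with $\beta=d-2s$ (which satisfies $0<\beta<d$ since $2<2s<d$) and $f=h=u$, taking $q=q_1=\frac{2d}{d+2s}$, to obtain
\begin{equation*}
h(u) \le C(d,d-2s)\,\|u\|_{L^{2d/(d+2s)}(\R^d)}^{2}.
\end{equation*}
Note that $q=\frac{2d}{d+2s}>1$ because $d>2s$, so the use of \er{fh} is legitimate.

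Next, I would interpolate $L^{2d/(d+2s)}$ between $L^1$ and $L^m$. Since $\frac{2d}{d+2s}<m$ by assumption and $\frac{2d}{d+2s}>1$ by \er{s1}, one can pick $\theta\in(0,1)$ uniquely determined by
\begin{equation*}
\frac{d+2s}{2d}=\theta+\frac{1-\theta}{m},
\end{equation*}
and H\"older's inequality then yields $\|u\|_{L^{2d/(d+2s)}}\le \|u\|_{L^1}^{\theta}\|u\|_{L^m}^{1-\theta}$. Solving for $\theta$ gives $\theta=\frac{m(d+2s)-2d}{2d(m-1)}$, which lies strictly between $0$ and $1$ precisely because $\frac{2d}{d+2s}<m$ and $2s<d$. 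Combining with the previous display,
\begin{equation*}
h(u) \le C\,\|u\|_{L^1(\R^d)}^{2\theta}\,\|u\|_{L^m(\R^d)}^{2(1-\theta)}.
\end{equation*}

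Finally, a short arithmetic check gives $2\theta=\frac{(d+2s)m-2d}{d(m-1)}$ and $2(1-\theta)=\frac{m(d-2s)}{d(m-1)}$, exactly matching the exponents in \er{VHLS}. Therefore the ratio in \er{max} is uniformly bounded above by $C(d,d-2s)$ times the interpolation constant, and $C_*<\infty$ follows by taking the supremum. There is no real obstacle here; the only thing to watch is that the interpolation exponent $\theta$ is genuinely in $(0,1)$, which is equivalent to the two strict inequalities $\frac{2d}{d+2s}<m$ and $2s<d$ that are both in force. The existence of an extremizer $W$ realising equality in \er{WW}, referenced after the lemma, is a separate compactness argument and is not needed for the finiteness statement itself.
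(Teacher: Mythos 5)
Your proof is correct and follows essentially the same route as the paper: apply the sharp HLS inequality with $\beta=d-2s$ and $q=q_1=\frac{2d}{d+2s}$ to get $h(u)\le C\|u\|_{L^{2d/(d+2s)}}^2$, then interpolate $L^{2d/(d+2s)}$ between $L^1$ and $L^m$ via H\"older, using $1<\frac{2d}{d+2s}<m$. Your exponent computation $2\theta=\frac{(d+2s)m-2d}{d(m-1)}$, $2(1-\theta)=\frac{m(d-2s)}{d(m-1)}$ checks out, and the verification that $\theta\in(0,1)$ is exactly the detail the paper leaves implicit.
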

\begin{proof}
Applying the HLS inequality \er{fh} with $f=h=u$ and $\beta=d-2s$ we obtain
\begin{align}\label{CMP}
h(u) = \iint_{\R^d \times \R^d}
\frac{u(x)u(y)}{|x-y|^{d-2s}} dxdy  \le C_{dsm}
\|u\|_{\frac{2d}{d+2s}}^2 \le C_{dsm} \|u\|_{L^1(\R^d)}^{\frac{(d+2s)m-2d}{d(m-1)}} \|u\|_{L^m(\R^d)}^{\frac{m(d-2s)}{d(m-1)}},
\end{align}
where we have used H\"{o}lder's inequality with
$1<\frac{2d}{d+2s}<m$. Consequently, $C_* \le
C_{dsm}$. $\Box$
\end{proof}

We next turn to the existence of maximisers for the VHLS inequality which can be proved by similar arguments as in \cite[Lemma 3.3]{BJ09}.

\begin{lemma}\label{existencecstar}(The existence of $C_*$) \quad Let $\frac{2d}{d+2s}<m<2-\frac{2s}{d}$. There exists a non-negative, radially symmetric and non-increasing function $W \in L^1 \cap L^m(\R^d)$ such that
$
h(W)=C_*
$
with the normalization $\|W\|_{L^1(\R^d)}= \|W\|_{L^m(\R^d)}=1.$ In addition, there is $R>0$ such that $W$ solves the following Euler-Lagrange equation
\begin{align}
\frac{m(d-2s)}{d(m-1)} C_* W^{m-1}=\left\{
                                     \begin{array}{ll}
                                       2 \int_{\R^d} \frac{W(y)}{|x-y|^{d-2s}} dy-\frac{(d+2s)m-2d}{d(m-1)} C_* supp W,~~& |x|<R, \\
0, & |x|>R.
\end{array}
\right.
\end{align}
Moreover, $R<\infty$ for $m>\frac{2d}{d+2s}.$
\end{lemma}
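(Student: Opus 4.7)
The plan is to produce $W$ as a symmetric maximizer by the direct method, then derive the Euler-Lagrange equation by varying the constrained problem, and finally show that the support is bounded by exploiting the decay of the Riesz potential.

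\textbf{Existence of the maximizer.} Take $\{u_n\}\subset L^1_+\cap L^m(\R^d)$ with the ratio in \er{max} converging to $C_*$. Homogeneity of the ratio under $u\mapsto\lambda u$ lets me normalize $\|u_n\|_{L^1(\R^d)}=\|u_n\|_{L^m(\R^d)}=1$, so that $h(u_n)\to C_*$. Replacing $u_n$ by its symmetric decreasing rearrangement $u_n^{*}$ preserves $L^p$ norms and does not decrease $h$ by the Riesz rearrangement inequality, so I may assume each $u_n$ is radially symmetric, non-negative and non-increasing. Such a sequence obeys the radial decay bound $u_n(x)\le d\|u_n\|_{L^1}/(\omega_d|x|^d)$, and interpolation between $L^1$ and $L^m$ gives uniform control in $L^q$ for every $q\in[1,m]$; in particular at the HLS-critical exponent $2d/(d+2s)$, which lies in this range because $m>2d/(d+2s)$. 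Helly's selection then extracts a subsequence converging a.e. to some radially non-increasing $W\ge 0$. The crux is to upgrade a.e. convergence to $h(u_n)\to h(W)$ with saturated constraints: a Brezis-Lieb-type decomposition applied to both the $L^p$ norms and the bilinear form $h$ yields $h(u_n)=h(W)+h(u_n-W)+o(1)$ and $\|u_n\|_{L^p}^p=\|W\|_{L^p}^p+\|u_n-W\|_{L^p}^p+o(1)$ for $p\in\{1,m\}$. Applying the bound \er{CMP} to $u_n-W$, together with the identity $a+b=2$ and strict subadditivity of the critical ratio on the split pieces, rules out nontrivial splitting, so $u_n-W\to 0$ in $L^1\cap L^m$; Fatou and the observation that any deficit in the constraints could be removed by rescaling then force $\|W\|_{L^1}=\|W\|_{L^m}=1$ and $h(W)=C_*$.

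\textbf{Euler-Lagrange and compact support.} For any $\phi\in C_0^\infty(\R^d)$, varying the constrained problem at $W$ with Lagrange multipliers $\lambda_1,\lambda_2$ for the $L^1$ and $L^m$ constraints yields
\[
2\int_{\R^d}\frac{W(y)}{|x-y|^{d-2s}}\,dy=\lambda_1+m\lambda_2\,W^{m-1}\qquad\text{on }\{W>0\}.
\]
Testing against $W$ itself and exploiting the two-parameter scaling invariance of the ratio (one parameter for the dilation $x\mapsto\lambda x$, another for $u\mapsto\mu u$) gives two linear conditions that pin down $\lambda_1=\frac{(d+2s)m-2d}{d(m-1)}C_*$ and $m\lambda_2=\frac{m(d-2s)}{d(m-1)}C_*$, recovering the stated equation. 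For compact support, radial monotonicity forces $W(x)\to 0$ as $|x|\to\infty$ and hence $W^{m-1}\to 0$ there, while the Riesz potential on the left decays only like $|x|^{-(d-2s)}$; the positive constant $\lambda_1>0$, which is strictly positive exactly because $m>\frac{2d}{d+2s}$, then obstructs the E-L equation at infinity and forces $W$ to vanish outside some ball $B_R$.

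\textbf{Main obstacle.} The subtlest step is the compactness argument: the HLS functional $h$ is not weakly continuous, and the problem sits precisely at the scaling-critical level in both of its free parameters. The combination of symmetric rearrangement, the pointwise radial decay bound, and a Brezis-Lieb-type decomposition is what converts a.e. convergence into strong convergence in $L^1\cap L^m$ and lets one identify the limit as a genuine maximizer.
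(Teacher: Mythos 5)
Your overall strategy coincides with the paper's: normalize by the two-parameter scaling, symmetrize via Riesz rearrangement, extract an a.e.\ limit by Helly's selection using the radial decay bound, then derive the Euler--Lagrange equation and read off compact support from the decay of the Riesz potential against the strictly positive multiplier $\lambda_1=\frac{(d+2s)m-2d}{d(m-1)}C_*>0$ (which is indeed positive exactly when $m>\frac{2d}{d+2s}$). The one place you genuinely diverge is the compactness step. The paper does not use a Brezis--Lieb decomposition or strict subadditivity at all: it observes that the normalized radial non-increasing sequence is dominated pointwise by the explicit majorant $G(|x|)=C_0\inf\{|x|^{-d},|x|^{-d/m}\}$, that $G\in L^{2d/(d+2s)}(\R^d)$ precisely because $m>\frac{2d}{d+2s}$ and $2s<d$, hence $h(G)<\infty$ by HLS, and then passes to the limit in $h$ by dominated convergence; Fatou gives $\|W\|_1\le1$, $\|W\|_m\le1$, and since $h(W)=C_*$ forces $J(W)\ge C_*$, both norms must equal $1$. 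This is shorter and, importantly, it rules out vanishing for free, since it delivers $h(W)=\lim_j h(u_j)=C_*>0$ directly.

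That last point is where your write-up has a soft spot. The Brezis--Lieb splitting $h(u_n)=h(W)+h(u_n-W)+o(1)$ combined with strict subadditivity of $(\|\cdot\|_1,\|\cdot\|_m)\mapsto\|\cdot\|_1^{a_0}\|\cdot\|_m^{b_0}$ only excludes a splitting into \emph{two nontrivial} pieces; it says nothing in the degenerate case $W\equiv0$, where the decomposition collapses to a tautology and all the mass sits in the remainder. So as stated, your argument does not exclude vanishing, and the claimed conclusion ``$u_n-W\to0$ in $L^1\cap L^m$'' does not follow. The fix is already in your hands: the pointwise majorant $u_n\le G$ with $h(G)<\infty$ gives $h(u_n)\to h(W)$ by dominated convergence, so $h(W)=C_*>0$ and $W\not\equiv0$ --- at which point the Brezis--Lieb machinery becomes unnecessary. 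Your derivation of the multiplier values (testing the Euler--Lagrange relation against $W$ to get $\lambda_1+m\lambda_2=2C_*$, plus a dilation/Pohozaev identity for the second relation) is correct and consistent with the paper's computation, which instead differentiates the scale-invariant ratio $J(W+\varepsilon\varphi)$ so that the multipliers emerge automatically; the two are equivalent.
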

\begin{proof}
For simplicity, we will use $\|u\|_r=\|u\|_{L^r(\R^d)}$ throughout this proof. Setting
\begin{align*}
J(u):=\frac{h(u)}{\|u\|_{1}^{\frac{(d+2s)m-2d}{d(m-1)}} \|u\|_{m}^{\frac{m(d-2s)}{d(m-1)}}},~~u \in L^1 \cap
L^m(\R^d),
\end{align*}
we consider a maximizing sequence $\{u_j\} \in L^1 \cap L^m(\R^d)$ such that
\begin{align*}
\lim_{j \to \infty} J (u_j)=C_*.
\end{align*}
Firstly, we prove that the maximizing sequence $\{u_j\}$ can be assumed to be non-negative, radially symmetric and non-increasing such that $\|u_j\|_1=\|u_j\|_m=1$. The
second step will show that the supremum can be achieved, i.e.
\begin{align*}
\lim_{j \to \infty} u_j=W, \quad J(W)=C_*
\end{align*}
with $\|W\|_1=\|W\|_m=1.$ Finally, the Euler-Lagrange equation solved by $W$ is obtained.

{\it\textbf{Step 1}} (Radially symmetric, non-increasing
assumption) \quad We denote the scaling
\begin{align*}
\bar{u}_j:=\alpha | u_j(\lambda x) |
\end{align*}
with
\begin{align*}
    \left\{
      \begin{array}{ll}
         \lambda=\|u_j\|_1^{\frac{m}{d(m-1)}} \|u_j\|_m^{-\frac{m}{d(m-1)}}  \\
         \alpha=\lambda^{d} \|u_j\|_1^{-1}.
      \end{array}
    \right.
\end{align*}
Then a direct computation shows that
\begin{align*}
\|\bar{u}_j\|_1=1,~~\|\bar{u}_j\|_m=1,
\end{align*}
and
\begin{align*}
    J(\bar{u}_j) & =\frac{ \iint_{\R^d \times \R^d} \frac{\bar{u}_j(x)\bar{u}_j(y)}{|x-y|^{d-2s}}
dxdy}{\|\bar{u}_j\|_{1}^{\frac{(d+2s)m-2d}{d(m-1)}} \|\bar{u}_j\|_{m}^{\frac{m(d-2s)}{d(m-1)}}} =
\frac{\alpha^2}{\lambda^{d+2s}} \iint_{\R^d \times \R^d}
\frac{|u_j(x)| |u_j(y)|}{|x-y|^{d-2s}} dxdy \\
 & = \frac{h(|u_j|)}{\|u_j\|_{1}^{\frac{(d+2s)m-2d}{d(m-1)}} \|u_j\|_{m}^{\frac{m(d-2s)}{d(m-1)}} } = J(|u_j|).
\end{align*}
We see that $J(u_j)$ is invariant under the scaling. Now we claim
that the maximizing sequence ${u_j}$ could be a non-negative, radially
symmetric and non-increasing function. Denoting $u_j^*$ as the symmetric
decreasing rearrangement of $\bar{u}_j$, the Riesz's rearrangement
inequality \cite[pp. 87]{lieb202} implies that
$\|u_j^*\|_1=\|\bar{u}_j\|_1=1,~\|u_j^*\|_m=\|\bar{u}_j\|_m=1$ and
\begin{align*}
J(u_j^*) & =h(u_j^*) \ge h(\bar{u}_j) =J(\bar{u}_j)=J(|u_j|)\ge J(u_j).
\end{align*}
This entails that $\{u_j^*\}$ is also a maximizing sequence and thus we
may assume that the maximizing sequence $u_j$ is a non-negative,
radially symmetric and decreasing function with
$\|u_j\|_1=\|u_j\|_m=1$.

{\it\textbf{Step 2}} (Existence of the supremum) \quad In
this step, we will show that the maximizing sequence is convergent.
In fact, since $u_j$ is non-negative and non-increasing, we find that
\begin{align*}
1=\| u_j \|_1 =\int_{\R^d} u_j(x) dx & = d \alpha_d
\int_{0}^{\infty}
u_j(r) r^{d-1} dr \\
& \ge d \alpha_d \int_0^R u_j(r) r^{d-1} dr \\
& \ge \alpha_d u_j(R) R^d
\end{align*}
which provides $u_j(R) \le \alpha_d^{-1} R^{-d}$. Likewise,
$u_j(R) \le \alpha_d^{-1/m} R^{-d/m} $. So that
\begin{align}
u_j(R) \le G(R) :=C_0 \inf \left\{ R^{-d}, ~R^{-d/m} \right\},
~~\mbox{ for any} ~R>0.
\end{align}
Now we have known that $u_j$ is bounded in $(R,\infty)$, then the
Helly's selection principle \cite[pp. 89]{lieb202} deduces that
there are a sub-sequence of ${u_j}$ (without relabeling for
convenience) such that
\begin{align*}
u_j \to W ~~\mbox{pointwisely},
\end{align*}
where $W$ is a non-negative and non-increasing function. Besides, the
fact $1<\frac{2d}{d+2s}<m$ implies that
\begin{align*}
\left\|G(|x|)\right\|_{2d/(d+2s)}^{2d/(d+2s)}=d\alpha_d
\int_{0}^{\infty} G(r)^{\frac{2d}{d+2s}} r^{d-1} dr =C \left[
\int_0^1 \frac{1}{r^{\frac{d}{m} \frac{2d}{d+2s}}} r^{d-1} dr +
\int_1^\infty \frac{1}{r^{d \frac{2d}{d+2s}}} r^{d-1} dr
\right] < \infty.
\end{align*}
So we can derive from the HLS inequality that $h(G)<\infty$. Then
\begin{align*}
\left\{
  \begin{array}{ll}
    u_j \le G(|x|), \\
    h(G)< \infty,   \\
    u_j \to W~~\mbox{pointwisely}
  \end{array}
\right.
\end{align*}
together with the dominated convergence theorem ensures
\begin{align}
\lim_{j \to \infty} h(u_j)= h(W).
\end{align}
Hence we have $h(W)=C_*$. Finally, we show $J(W)=C_*.$ Since $u_j \to W$ pointwisely, Fatou's lemma tells us that
\begin{align*}
\left\{
  \begin{array}{ll}
    \|W\|_1 \le \displaystyle \liminf_{j \to \infty} \|u_j\|_1 \le 1, \\
    \|W\|_m \le \displaystyle \liminf_{j \to \infty} \|u_j\|_m \le 1
  \end{array}
\right.
\end{align*}
which results in
\begin{align}
C_*=\lim_{j \to \infty} J(u_j) \ge
J(W)=\frac{h(W)}{ \|W\|_{1}^{\frac{(d+2s)m-2d}{d(m-1)}} \|W\|_{m}^{\frac{m(d-2s)}{d(m-1)}} } \ge h(W)=C_*.
\end{align}
Therefore, we conclude that $J(W)=C_*$ which in turn implies that  $\|W\|_1=\|W\|_m=1$.

{\it\textbf{Step 3}} (The Euler-Lagrange equation solved by $W$) \quad Without loss of generality, we may assume that for any $\rho>0$
\begin{align*}
\Omega:=\left\{ x\in \R^d \Big | W(x)>0,~|x|< \rho  \right\}, \\
\partial \Omega:=\left\{ x\in \R^d \Big | W(x)=0,~|x|= \rho  \right\}.
\end{align*}
Consider $\varphi \in C_0^\infty(\Omega),~\lambda>0$ and there exists
$\lambda_0>0$ such that $W+\lambda \varphi $ in $\Omega$ for
$0<\lambda<\lambda_0$. Then a tedious calculation shows that
\begin{align}\label{functional}
0=&\frac{d}{d\varepsilon} \Bigg|_{\varepsilon=0} J(W+\varepsilon \varphi) \nonumber \\
=& \frac{1}{\|W\|_1^{2a} \|W\|_m^{2b}} \Big( 2 \|W \|_1^a \|W \|_m^b \iint_{\R^d \times \R^d} \frac{\varphi(x) W(y)}{|x-y|^{d-2s}} dxdy \nonumber \\
& -a h(W) \|W\|_1^{a-1} \|W\|_m^b \int_{\Omega} \varphi dx - b h(W) \|W\|_m^{b-m} \|W\|_1^{a} \int_{\R^d} W^{m-1} \varphi dx \Big),
\end{align}
where $a=\frac{(d+2s)m-2d}{d(m-1)}, b=\frac{m(d-2s)}{d(m-1)}.$ Using the assumption that $\|W\|_1=\|W\|_m=1,$ \er{functional} implies that $W$ solves the following equation
\begin{align}\label{WW1}
  2 \int_{\R^d} \frac{W(y)}{|x-y|^{d-2s}} dy=\frac{m(d-2s)}{d(m-1)} C_* W^{m-1}+\frac{(d+2s)m-2d}{d(m-1)} C_* supp W,~~ a.e.~~ x \in \Omega.
\end{align}
Since $W \in L^1 \cap L^m(\R^d),$ it follows from the weak Young inequality that $\frac{1}{|x|^{d-2s}} \ast W \in L^r(\R^d)$ for each $r \in \Big( \frac{d}{d-2s},\frac{1}{\frac{1}{m}-\frac{2s}{d}} \Big].$ In particular, $\frac{1}{|x|^{d-2s}} \ast W$ and $W^{m-1}$ both belong to $L^{\frac{m}{m-1}}(\R^d)$ due to $m>\frac{2d}{d+2s}.$ This property together with \er{WW1} allows us to assert $\rho<\infty$, and thus we claim that $\Omega$ is bounded. If not, we choose a sequence of point $|x_n| \to \infty $ with $x_n \in \Omega,$ owing to $m>\frac{2d}{d+2s}$ we have that $\frac{m(d-2s)}{d(m-1)} C_* W(x_n)^{m-1}+\frac{(d+2s)m-2d}{d(m-1)} C_* supp W>0$ whereas the sequence $ \int_{\R^d} \frac{W(y)}{|x_n-y|^{d-2s}} dy \to 0$, a contradiction. $\Box$
\end{proof}

\section{Proof of Theorem \ref{main}} \label{Proofmain}

In this part, with the aid of the characterization of the extremal function $W$ and the free energy, we will proceed to show that $\|u(\cdot,t)\|_{L^m(\R^d)}$ can be bounded from below or above separately provided by small or large initial data in $L^m$ norm, then we establish the global existence and finite time blow-up of solutions to \er{fracks} in Section \ref{sub41} and Section \ref{sub42}.
\begin{proposition}\label{belowabove}
Let $u(x,t)$ be the free energy solution to \er{fracks} and set $a=\frac{(d+2s)m-2d}{2d-2s-dm}$. Under assumption \er{12} and
\begin{align}\label{Fu0}
\|u_0\|_{L^1(\R^d)}^a F(u_0)<\|W\|_{L^1(\R^d)}^a F(W)
\end{align}
on the initial data.
\begin{enumerate}
  \item[\textbf{(i)}] If
  \begin{align}\label{xiaoyum1}
  \|u_0\|_{L^1 (\R^d)}^a \|u_0\|_{L^m (\R^d)}^m< \|W\|_{L^1 (\R^d)}^a \|W\|_{L^m (\R^d)}^m,
  \end{align}
  then there exists a constant $\mu_1<1$ such that the corresponding free energy solution $u$ satisfies that for any $t>0,$
     \begin{align}\label{xiaoyum}
      \|u(\cdot,t)\|_{L^1(\R^d)}^a \|u(\cdot,t)\|_{L^m(\R^d)}^m< \mu_1 \|W\|_{L^1 (\R^d)}^a \|W\|_{L^m (\R^d)}^m.
    \end{align}
  \item[\textbf{(ii)}] If
  \begin{align}\label{dayum1}
  \|u_0\|_{L^1 (\R^d)}^a \|u_0\|_{L^m (\R^d)}^m > \|W\|_{L^1 (\R^d)}^a \|W\|_{L^m (\R^d)}^m,
  \end{align}
  then there exists a constant $\mu_2>1$ such that the corresponding free energy solution $u$ satisfies that for any $t>0,$
\begin{align}\label{dayum}
     \|u(\cdot,t)\|_{L^1(\R^d)}^a \|u(\cdot,t)\|_{L^m(\R^d)}^m > \mu_2 \|W\|_{L^1 (\R^d)}^a \|W\|_{L^m (\R^d)}^m.
\end{align}
\end{enumerate}
\end{proposition}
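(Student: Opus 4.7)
The plan is to reduce Proposition~\ref{belowabove} to a one-variable statement about a universal strictly concave profile $\Psi$, whose unique maximizer is tied to the extremal $W$, and then to use the free-energy dissipation together with a connectedness argument to trap $y(t):=\|u_0\|_1^a\|u(\cdot,t)\|_m^m$ in the correct component of a sublevel set of $\Psi$.

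\emph{Step 1 (canonical one-variable reduction).} Set $M:=\|u_0\|_1$ (conserved by \eqref{fracks}). With $a_1=\frac{(d+2s)m-2d}{d(m-1)}$ and $b_1=\frac{m(d-2s)}{d(m-1)}$, apply the VHLS inequality \eqref{VHLS} and multiply by $M^a$. Substituting $\|u(t)\|_m^m=y(t)M^{-a}$ and $\|u(t)\|_1=M$ collapses the $M$-exponent of the convolution term to $a_1+a(1-b_1/m)$, which vanishes precisely by the defining relation $a=\frac{(d+2s)m-2d}{2d-2s-dm}$. One obtains
\begin{equation*}
M^a F(u(\cdot,t))\ge \Psi(y(t)),\qquad \Psi(y):=\frac{y}{m-1}-\frac{c_{d,s}C_*}{2}\,y^{b_1/m},
\end{equation*}
a universal profile independent of $u$ and $M$.

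\emph{Step 2 ($\max\Psi=\|W\|_1^aF(W)$).} Both $\|W\|_1^aF(W)$ and $\|W\|_1^a\|W\|_m^m$ are invariants of the steady-state rescaling $W\mapsto\mu^{2s/(2-m)}W(\mu\cdot)$, so one may normalize $\|W\|_1=\|W\|_m=1$; then $\|W\|_1^a\|W\|_m^m=1$. Because $W$ is simultaneously a VHLS extremal (Lemma~\ref{existencecstar}) and a steady state of \eqref{fracks} (as noted after \eqref{WW}), comparing the Euler-Lagrange equation of Lemma~\ref{existencecstar} with the chemical-potential equation \eqref{steadystates} forces
\begin{equation*}
c_{d,s}C_*=\frac{2d}{d-2s}.
\end{equation*}
Then $\Psi'(y)=\frac{1}{m-1}(1-y^{b_1/m-1})$, and since $b_1/m>1$ (equivalent to $m<2-2s/d$), $\Psi$ is strictly concave on $(0,\infty)$, $\Psi(0)=0$, $\Psi(y)\to-\infty$ as $y\to\infty$, and $\Psi$ has its unique maximum at $y=1$ with
\begin{equation*}
\Psi(1)=\frac{1}{m-1}-\frac{d}{d-2s}=F(W)=\|W\|_1^aF(W).
\end{equation*}
The hypotheses of the proposition thus read $M^aF(u_0)<\max\Psi$ and $y(0)\lessgtr 1$.

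\emph{Step 3 (connectedness trapping).} By Proposition~\ref{energy}, $F(u(\cdot,t))\le F(u_0)$ for every $t\in[0,T_w)$, hence
\begin{equation*}
\Psi(y(t))\le M^aF(u_0)=:\kappa_0<\max\Psi.
\end{equation*}
Strict concavity gives $\{y\ge 0:\Psi(y)\le\kappa_0\}=[0,y_-]\cup[y_+,\infty)$ with $0<y_-<1<y_+$ (if $\kappa_0\le 0$ only the right component exists, but then $y(0)\in(0,1)$ is impossible since $\Psi>0$ on $(0,1)$, so case (i) is vacuous in that regime). The map $t\mapsto y(t)$ is continuous on $[0,T_w)$: from the uniform bounds in Theorem~\ref{ueps}, the strong convergence \eqref{conver1}, and standard Lions-Aubin compactness one obtains $u\in C([0,T_w);L^m(\R^d))$. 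Hence $y(\cdot)$ stays in the connected component of $\{\Psi\le\kappa_0\}$ containing $y(0)$: in case (i), $y(0)<1$ forces $y(t)\le y_-$, so any $\mu_1\in(y_-,1)$ yields \eqref{xiaoyum}; in case (ii), $y(0)>1$ forces $y(t)\ge y_+$, so any $\mu_2\in(1,y_+)$ yields \eqref{dayum}.

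The main obstacle is Step~2: without the identification $c_{d,s}C_*=2d/(d-2s)$, which crucially relies on $W$ being simultaneously a VHLS extremal and a stationary solution of \eqref{fracks}, the maximum value $\Psi(1)$ would not coincide with the threshold $\|W\|_1^aF(W)$ in the hypothesis, and the dichotomy would not close. The remaining ingredients---strict concavity, the free-energy inequality, and continuity of $\|u(t)\|_m$---are routine given the framework already developed.
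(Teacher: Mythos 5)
Your Steps 1 and 3 are correct and follow essentially the same route as the paper: reduce to the scalar profile $g(y)=\frac{y}{m-1}-\frac{c_{d,s}C_*}{2}y^{\beta}$ with $\beta=\frac{d-2s}{d(m-1)}>1$, then use the monotonicity of the free energy together with continuity of $t\mapsto\|u(\cdot,t)\|_{L^m(\R^d)}$ to trap $y(t)=\|u_0\|_{L^1}^a\|u(\cdot,t)\|_{L^m}^m$ in one connected component of a sublevel set of $g$. Your explicit treatment of the continuity of $y(\cdot)$ and of the degenerate case $\kappa_0\le 0$ is in fact more careful than the paper's own write-up.

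The genuine gap is in Step 2. The identity $c_{d,s}C_*=\frac{2d}{d-2s}$ cannot be correct in general: its right-hand side is independent of $m$, whereas $C_*$ depends on $m$ --- testing $J$ with the HLS optimizer shows $C_*\to C(d,d-2s)$ (the sharp constant \er{CHLS}) as $m\downarrow\frac{2d}{d+2s}$, while $C_*<C(d,d-2s)$ strictly for larger $m$ since equality in the H\"older step of \er{CMP} would require $W$ to be an indicator function. The source of the error is that you impose two incompatible normalizations on $W$: the Euler--Lagrange equation of Lemma \ref{existencecstar} determines the extremal only up to the two-parameter scaling $W\mapsto\mu W(\lambda\cdot)$, of which only a one-parameter subfamily solves the steady-state equation \er{steadystates}, and that subfamily need not contain the member with $\|W\|_{L^1}=\|W\|_{L^m}=1$; your invariance under the single-parameter rescaling $W\mapsto\mu^{2s/(2-m)}W(\mu\cdot)$ cannot produce a two-norm normalization. ``Comparing'' the two equations therefore does not yield the identity --- it only identifies which rescaling of the normalized extremal is a steady state. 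The identification actually needed (by you and, equally, by the paper, whose step ``$H(W)\ge g(\|u\|_1^a\|u\|_m^m)$ for all $u$'' does not follow from \er{06081}--\er{06082}) is obtained by taking $W$ to be the extremal rescaled so as to be a steady state and combining the dilation (Pohozaev) identity $d\|W\|_{L^m}^m=\frac{c_{d,s}(d-2s)}{2}h(W)$ with $h(W)=C_*\|W\|_{L^1}^{a_0}\|W\|_{L^m}^{b_0}$; this gives $\|W\|_{L^1}^a\|W\|_{L^m}^m=\bigl(\tfrac{2d}{(d-2s)c_{d,s}C_*}\bigr)^{\frac{d(m-1)}{2d-2s-dm}}$, which is exactly the maximum point of $g$, and then $\max g=g\bigl(\|W\|_{L^1}^a\|W\|_{L^m}^m\bigr)=\|W\|_{L^1}^aF(W)$ follows without any claim on the numerical value of $C_*$. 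With that substitution your Step 3 closes the argument.
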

\begin{proof}
For simplicity, we will use $\|u\|_r$ to replace $\|u\|_{L^r(\R^d)}$. Firstly we propose the variational functional by the expression of $F(u)$ that
\begin{align}\label{Hu}
H(u):=\|u\|_1^a F(u)=\frac{1}{m-1}\|u\|_1^a \|u\|_m^m- \frac{c_{d,s}}{2} \|u\|_1^a \iint_{\R^d \times \R^d} \frac{u(x)u(y)}{|x-y|^{d-2s}} dxdy.
\end{align}
On the one hand, we can infer from \er{max} that \begin{align}\label{06081}
H(u) & \ge \frac{1}{m-1}\|u\|_1^a \|u\|_m^m-\frac{c_{d,s}}{2} C_{\ast} \|u\|_1^{a+a_0} \|u\|_m^{b_0} \nonumber \\
& = \frac{1}{m-1}\|u\|_1^a \|u\|_m^m-\frac{c_{d,s}}{2} C_{\ast} \left( \|u\|_1^{a} \|u\|_m^{m} \right)^\beta,
\end{align}
where the parameters are endowed with
\begin{align}
a_0=\frac{(d+2s)m-2d}{d(m-1)},~~b_0=\frac{m(d-2s)}{d(m-1)},~~\beta=\frac{d-2s}{d(m-1)}.
\end{align}
On the other hand, plugging the radial function $W$ achieved in Lemma \ref{existencecstar} into $H(u),$ $H(W)$ reads
\begin{align}\label{06082}
H(W)& =\frac{1}{m-1}\|W\|_1^a \|W \|_m^m-\frac{c_{d,s}}{2} C_{\ast} \|W\|_1^{a+a_0} \|W \|_m^{b_0} \nonumber \\
& = \frac{1}{m-1}\|W\|_1^a \|u\|_m^m-\frac{c_{d,s}}{2} C_{\ast} \left( \|W\|_1^{a} \|W\|_m^{m} \right)^\beta.
\end{align}
Let us set
\begin{align}
g(x)=\frac{1}{m-1}x-\frac{c_{d,s}}{2} C_{\ast} x^\beta.
\end{align}
Combining \er{06081} with \er{06082} gives
\begin{align}
g \left( \|W\|_1^a \|W \|_m^m \right) =H(W) \ge g \left( \|u\|_1^a \|u \|_m^m \right)
\end{align}
for all $u \in L^1 \cap L^m(\R^d)$. Hence we deduce that the maximum point of $g(x)$ is attained by $x_*=\|W\|_1^a \|W \|_m^m$ where $x_*$ fulfills
\begin{align}
g'(x_*)=\frac{1}{m-1}-\frac{c_{d,s} C_{\ast}}{2} \beta x_*^{\beta-1}=0.
\end{align}
In other words,
\begin{align}\label{xstar}
\|W\|_1^a \|W \|_m^m =\left( \frac{2}{(m-1)c_{d,s} C_{\ast} \beta }  \right)^{\frac{1}{\beta-1}}.
\end{align}
Coming back to $H(u)$, in case that
\begin{align}
\|u_0 \|_1^a F\left(u_0 \right)<\|W\|_1^a F\left(W \right),
\end{align}
there is a $0<\delta<1$ such that
\begin{align}
\|u_0 \|_1^a F\left(u_0 \right)<\delta \|W\|_1^a F\left(W \right).
\end{align}
Because $F(u)$ is non-increasing with respect to time, it turns out to be
\begin{align}\label{0608starstar}
g\left( \|u\|_1^a \|u \|_m^m \right)\le H(u) = \|u\|_1^a F(u) \le \|u_0\|_1^a F(u_0) < \delta \|W\|_1^a F\left(W \right)
\end{align}
after using the mass conservation $\|u_0\|_1=\|u(\cdot,t)\|_1$ for any $t>0.$ Since for any $x<\|W\|_1^a \|W \|_m^m,$ $g(x)$ is a strictly increasing function. So whenever we have
\begin{align}
\|u_0\|_{1}^a \|u_0\|_{m}^m< \|W\|_{1}^a \|W\|_{m}^m,
\end{align}
\er{0608starstar} implies that there exists a $\mu_1<1$ such that for any $t>0,$
\begin{align}
\|u \|_{1}^a \|u\|_{m}^m< \mu_1 \|W\|_{1}^a \|W\|_{m}^m.
\end{align}
Reversely, for any $x>\|W\|_1^a \|W \|_m^m,$ $g(x)$ is strictly decreasing. So if
\begin{align}
\|u_0\|_{1}^a \|u_0\|_{m}^m> \|W\|_{1}^a \|W\|_{m}^m,
\end{align}
we use once more \er{0608starstar} to demonstrate that there exists a $\mu_2>1$ such that
\begin{align}
\|u \|_{1}^a \|u\|_{m}^m> \mu_2 \|W\|_{1}^a \|W\|_{m}^m.
\end{align}
This completes the proof. $\Box$
\end{proof}

\subsection{Proof of global existence $\textbf{\emph{(i)}}$ of Theorem \ref{main}} \label{sub41}

A direct consequence of \er{xiaoyum} and the blow-up criterion is the following global existence result.

\begin{proposition}(Global existence)\label{Lin}
Under assumption \er{12}, \er{Fu0} and \er{xiaoyum1}, the weak solution obtained in Theorem \ref{ueps} exists on $[0,\infty).$
\end{proposition}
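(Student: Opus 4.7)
The plan is to verify the negation of the blow-up criterion \er{criterion}: it suffices to show that under the hypotheses \er{12}, \er{Fu0}, \er{xiaoyum1}, one has $\sup_{t \in [0,T_w)} \|u(\cdot,t)\|_{L^\infty(\R^d)} < \infty$, which then forces $T_w = \infty$. The starting point is Proposition \ref{belowabove}\textbf{(i)}, which together with mass conservation $\|u(\cdot,t)\|_{L^1(\R^d)} = \|u_0\|_{L^1(\R^d)}$ yields a time-independent control
\begin{align*}
\sup_{t \in [0,T_w)} \|u(\cdot,t)\|_{L^m(\R^d)}^m \le \frac{\mu_1 \|W\|_{L^1(\R^d)}^a \|W\|_{L^m(\R^d)}^m}{\|u_0\|_{L^1(\R^d)}^a}.
\end{align*}
Hence the essential task is to bootstrap this uniform $L^1 \cap L^m$ bound up to a uniform $L^\infty$ bound.

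I would revisit the $L^r$ estimate \er{230127} from Step 1 of Theorem \ref{ueps}, but instead of choosing $q_1 = rq$ in the HLS inequality as in \er{cdsr}, I would pair $u_\varepsilon^r(x)$ with $u_\varepsilon(y) \in L^m(\R^d)$, i.e.\ take $q_1 = m$ in \er{fh}. The condition $m > \tfrac{2d}{d+2s}$ is precisely what ensures the conjugate exponent $q$, determined by $1/q = (d-2+2s)/d - 1/m$, is strictly larger than $1$, so HLS is legitimate and gives
\begin{align*}
\iint_{\R^d \times \R^d} \frac{u_\varepsilon^r(x)\,u_\varepsilon(y)}{|x-y|^{d+2-2s}}\,dxdy \le C\,\|u_\varepsilon\|_{L^{qr}(\R^d)}^r\,\|u_\varepsilon\|_{L^m(\R^d)}.
\end{align*}
A Gagliardo--Nirenberg interpolation applied to $v = u_\varepsilon^{(m+r-1)/2}$ with endpoints $q_0 (m+r-1)/2 = m$ and $r_0 (m+r-1)/2 = qr$ yields $\|u_\varepsilon\|_{L^{qr}}^r \le C \|\nabla u_\varepsilon^{(m+r-1)/2}\|_{L^2}^{\alpha}\,\|u_\varepsilon\|_{L^m}^{\beta}$ with $\alpha < 2$ in the regime $1 < m < 2-2s/d$, so Young's inequality absorbs the gradient norm into the dissipation of \er{230127} and leaves
\begin{align*}
\frac{d}{dt}\|u_\varepsilon\|_{L^r(\R^d)}^r + c_r \bigl\|\nabla u_\varepsilon^{(m+r-1)/2}\bigr\|_{L^2(\R^d)}^2 \le C_r,
\end{align*}
where $c_r, C_r$ depend only on $\|u_0\|_{L^1 \cap L^m(\R^d)}$ and on the extremal function $W$. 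A further GN estimate $\|u_\varepsilon\|_{L^r}^r \le C \|\nabla u_\varepsilon^{(m+r-1)/2}\|_{L^2}^{\gamma_1}\|u_\varepsilon\|_{L^m}^{\gamma_2}$ then converts this to an autonomous ODE inequality $\tfrac{d}{dt}y \le -c'y^{1+\sigma} + C'$ for $y := \|u_\varepsilon\|_{L^r}^r$, yielding a uniform-in-$\varepsilon$ and $t$ bound $\sup_{t \in [0,T_w)} \|u_\varepsilon(\cdot,t)\|_{L^r(\R^d)} \le K_r$ for every $r \in [m,\infty)$.

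With uniform $L^r$ bounds in hand for arbitrarily large $r$, the Moser iteration of Step 2 of the proof of Theorem \ref{ueps} transfers verbatim and produces $\sup_\varepsilon \sup_{t \in [0,T_w)} \|u_\varepsilon(\cdot,t)\|_{L^\infty(\R^d)} \le K_\infty$. Passing to the limit $\varepsilon \to 0$ as in the convergence \er{conver1} preserves this bound for $u$, contradicting \er{criterion} and forcing $T_w = \infty$. The main technical obstacle is calibrating the HLS--GN chain so that $\alpha < 2$ (enabling the Young absorption) while keeping the remainder a genuine constant rather than a superlinear power of $\|u_\varepsilon\|_{L^r}^r$: the inequality $m > \tfrac{2d}{d+2s}$ makes the HLS exponent $q$ admissible, the inequality $m < 2 - \tfrac{2s}{d}$ keeps the diffusion on the favorable side of scaling, and the smallness hypothesis \er{xiaoyum1} enters the argument only through the a priori $L^m$ bound it supplies via Proposition \ref{belowabove}, decoupling precisely the nonlinear feedback that produced the blow-up-type ODE \er{Lr} in the local existence theorem.
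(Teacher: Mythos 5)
Your proposal is correct and follows essentially the same route as the paper: the uniform $L^m$ bound from Proposition \ref{belowabove}\textbf{(i)} plus mass conservation, an HLS--Gagliardo--Nirenberg--Young chain anchored at the $L^m$ endpoint to turn the $L^r$ differential inequality into one with a bounded right-hand side, a second GN interpolation to close an ODE giving uniform-in-time $L^r$ bounds, Moser iteration for $L^\infty$, and the blow-up criterion of Theorem \ref{ueps} to conclude $T_w=\infty$. The only (cosmetic) deviation is pairing $u^r$ directly against $u\in L^m$ in HLS rather than using the symmetric exponents $q_1=rq$ and interpolating afterwards as in \er{230601}; both reduce to the same absorption structure, and the condition $m>\tfrac{2d}{d+2s}$ enters, as in the paper, to make the Young exponent strictly less than $2$.
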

\begin{proof}
Firstly, the usefulness of \er{xiaoyum} will be illustrated to show that the $L^r \left(r>m-1+(2s-2)m/d \right)$ norm is uniformly bounded for any $t>0.$ Then the solution can be proved to be uniformly bounded in time by the iterative method. Finally, we demonstrate that the local weak solution achieved in Theorem \ref{ueps} exists globally.

\textbf{(i)} ~~ The theory is based on the following a priori estimate for solutions to \er{fracks}. It is obtained after multiplying the equation by $r u^{r-1}$.
\begin{align}\label{230530}
& \frac{d}{dt} \int_{\R^d} u^r dx+\frac{4mr(r-1)}{(m+r-1)^2} \int_{\R^d} \left| \nabla u^{\frac{m+r-1}{2}} \right|^2 dx \nonumber \\
= &c_{d,s}(d-2s) (2s-2)(r-1) \iint_{\R^d \times \R^d} \frac{u^r(x) u(y)}{ |x-y|^{d+2-2s}} dxdy.
\end{align}
Due to $2s>2,$ we utilize the HLS inequality \er{fh} with $f=u^r(x)$ and $h=u(y)$ such that
\begin{align}\label{230601}
\iint_{\R^d \times \R^d} \frac{u^r(x) u(y)}{|x-y|^{d+2-2s}} dxdy
\le  C_{dsr} \|u^r\|_{L^{q}(\R^d)} \|u\|_{L^{q_1}(\R^d)}
=  C_{dsr} \left\|u \right\|_{L^{\frac{r+1}{1+(2s-2)/d}}(\R^d)}^{r+1},
\end{align}
where $\frac{1}{q}+\frac{1}{q_1}+\frac{d+2-2s}{d}=2$ with $q_1=rq=\frac{r+1}{1+(2s-2)/d}$, $C_{dsr}$ is a bounded constant depending on $d,s,r$. Now choosing $\frac{r+1}{1+(2s-2)/d}>m$ and employing the GNS inequality \cite[(2.4)]{BL13} lead us to obtain
\begin{align}
\left\|u \right\|_{L^{\frac{r+1}{1+(2s-2)/d}}(\R^d)}^{r+1}
\le C \left\|\nabla u^{\frac{m+r-1}{2}} \right\|_{L^2(\R^d)}^{a \theta} \|u \|_{L^{m}(\R^d)}^{a(1-\theta)\frac{m+r-1}{2}},
\end{align}
where
\begin{align}
a=\frac{2(r+1)}{m+r-1},~~\theta=\frac{\frac{m+r-1}{2m}-\frac{(m+r-1)(1+(2s-2)/d)}{2(r+1)}}{\frac{m+r-1}{2m}-\frac{d-2}{2d}  }.
\end{align}
Next we can use Young's inequality with
\begin{align}
a \theta =2\left( 1+\frac{\frac{2-m}{2m}-\frac{2s}{2d}}{\frac{m+r-1}{2m}-\frac{d-2}{2d}} \right)<2,~~\mbox{for}~~m>\frac{2d}{d+2s}
\end{align}
to arrive at
\begin{align}\label{230624}
\|u\|_{L^{\frac{r+1}{1+(2s-2)/d}}(\R^d)}^{r+1} \le \frac{2mr}{c_{d,s}(d-2s) C_{dsr} (2s-2) (m+r-1)^2} \left\|\nabla u^{\frac{m+r-1}{2}} \right\|_{L^2(\R^d)}^2+C \|u\|_{L^m(\R^d)}^{(r+1)(1-\theta)/(1-a\theta/2)},
\end{align}
where the constant $C_{dsr}$ is defined as in \er{230601}.

\textbf{(ii)} ~~To go further and tackle $\|u\|_{L^r(\R^d)}$, using condition \er{xiaoyum} which expresses
\begin{align}
\|u(\cdot,t)\|_{L^m(\R^d)} <C,~~\mbox{for~any}~t>0,
\end{align}
we can infer from \er{230530}, \er{230601} and \er{230624} to state
\begin{align}\label{230602}
\frac{d}{dt}\|u\|_{L^r(\R^d)}^r +\frac{2mr(r-1)}{(m+r-1)^2} \int_{\R^d} \left| \nabla u^{\frac{m+r-1}{2}} \right|^2 dx \le C.
\end{align}
On the other hand, GNS inequality gives rise to
\begin{align}\label{2306022}
\|u\|_{L^r(\R^d)}^r  \le \left\|\nabla u^{\frac{m+r-1}{2}} \right\|_{L^2(\R^d)}^{\frac{2r \theta}{m+r-1}} \|u\|_{L^1(\R^d)}^{r(1-\theta)} \le \left\|\nabla u^{\frac{m+r-1}{2}} \right\|_{L^2(\R^d)}^2 +C\left( \|u\|_{L^1(\R^d)}\right),
\end{align}
where $\theta=\frac{\frac{m+r-1}{2}-\frac{m+r-1}{2r}}{\frac{m+r-1}{2}-\frac{d-2}{2d}}$.
Combining \er{230602} with \er{2306022} allows us to conclude that for any $r>m-1+(2s-2)m/d$
\begin{align}
\|u(\cdot,t)\|_{L^r(\R^d)}<C,~~\mbox{for~any}~t>0.
\end{align}

\textbf{(iii)} ~~We apply Moser iterative method to obtain the uniformly boundedness of the weak solution,
\begin{align}\label{Linftime}
\|u(\cdot,t)\|_{L^\infty(\R^d)}< C \left( \|u_0\|_{L^1(\R^d)},\|u_0\|_{L^\infty(\R^d)} \right),~~\mbox{for any}~~t>0.
\end{align}
Recalling Proposition \ref{energy}, there are $T_w$ and a free energy solution $u(x,t)$ to \er{fracks} in $[0,T_w)$ with initial condition $u_0.$ Thanks to \er{Linftime}, it is sufficient to show that $T_w=\infty$ by Theorem \ref{ueps}. Hence we obtain a global free energy solution to \er{fracks}. $\Box$
\end{proof}

\subsection{Proof of finite time blow-up $\textbf{(\emph{ii})}$ of Theorem \ref{main}} \label{sub42}

In this subsection, we start with large initial data and utilize the standard argument relying on the evolution of the second moment of solutions as originally done in \cite{JL92} to prove the finite time blow-up result in Theorem \ref{main}.

\begin{proposition}(Finite time blow-up)
Let $a=\frac{(d+2s)m-2d}{2d-2s-dm}$ and $u(x,t)$ be a weak free energy solution obtained in Theorem \ref{ueps} on $[0,T_w)$. Assume $\int_{\R^d} |x|^2 u_0(x) dx<\infty, \|u_0\|_{L^1(\R^d)}^a F(u_0)<\|W\|_{L^1(\R^d)}^a F(W)$. If
\begin{align}
 \|u_0\|_{L^1 (\R^d)}^a \|u_0\|_{L^m (\R^d)}^m > \|W\|_{L^1 (\R^d)}^a \|W\|_{L^m (\R^d)}^m,
\end{align}
then the solution satisfies
\begin{align}
\frac{d}{dt} m_2(t)=\frac{d}{dt}\int_{\R^d} |x|^2 u(x,t) dx=\left( 2d-\frac{2(d-2s)}{m-1} \right) \int_{\R^d} u^m dx+2(d-2s)F(u),~0<t<T_w.
\end{align}
Here $T_w<\infty$ and $\|u\|_{L^\infty(\R^d)}$ blows up at finite time in the sense that $\displaystyle \limsup_{t \to T_w}\|u(\cdot,t)\|_{L^\infty(\R^d)}=\infty$.
\end{proposition}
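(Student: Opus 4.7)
The plan is to derive the moment identity by direct computation on the regularized solutions and then combine Proposition \ref{belowabove}(ii) with the Euler-Lagrange structure of $W$ to bound the right-hand side strictly above by a negative constant; nonnegativity of $m_2$ then forces $T_w<\infty$.

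For the identity I would test $u_t=\Delta u^m-\nabla\cdot(u\nabla c)$ against $|x|^2$. Two integrations by parts give $\int_{\R^d}|x|^2\Delta u^m\,dx=2d\int_{\R^d}u^m\,dx$. For the drift term, integration by parts once combined with $\nabla c=-c_{d,s}(d-2s)\int_{\R^d}(x-y)|x-y|^{-(d-2s+2)}u(y)\,dy$ and $x\leftrightarrow y$ symmetrization (which replaces $x\cdot(x-y)$ by $\tfrac12|x-y|^2$ under the double integral) yields $-c_{d,s}(d-2s)\,h(u)$. Substituting $c_{d,s}h(u)=\tfrac{2}{m-1}\int u^m\,dx-2F(u)$ from the definition \er{Fu} then gives the stated identity. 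This formal manipulation needs to be performed on the regularized problem \er{kseps}, where $u_\varepsilon$ is smooth, together with a cutoff $\psi_R(x)\nearrow|x|^2$ to legitimize testing against an unbounded weight; the limit $\varepsilon\to 0$ is taken via \er{conver1} and the uniform $L^1\cap L^\infty$ bounds from Theorem \ref{ueps}, with the finite initial second moment propagating by comparison of the extra cutoff error terms.

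Rewriting the identity as
\begin{equation*}
\frac{d}{dt}m_2(t)=-\frac{2(2d-2s-dm)}{m-1}\int_{\R^d}u^m\,dx+2(d-2s)F(u),
\end{equation*}
the coefficient of $\int u^m$ is strictly negative because $m<2-2s/d$. Proposition \ref{belowabove}(ii) and mass conservation give $\|u(\cdot,t)\|_{L^m}^m\ge M:=\mu_2\|W\|_{L^1}^a\|W\|_{L^m}^m/\|u_0\|_{L^1}^a$ for all $t>0$; the free-energy inequality \er{Fuequality} gives $F(u(\cdot,t))\le F(u_0)$. Hence
\begin{equation*}
\frac{d}{dt}m_2(t)\le -\frac{2(2d-2s-dm)}{m-1}\,M+2(d-2s)F(u_0).
\end{equation*}
The crux is that this upper bound is strictly negative. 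The variational identity $g'(x_*)=0$ at $x_*=\|W\|_{L^1}^a\|W\|_{L^m}^m$ used in the proof of Lemma \ref{existencecstar} implies
\begin{equation*}
\|W\|_{L^1}^a\|W\|_{L^m}^m=\frac{(m-1)(d-2s)}{2d-2s-dm}\,\|W\|_{L^1}^a F(W),
\end{equation*}
which converts the desired negativity into $\|u_0\|_{L^1}^a F(u_0)<\mu_2\|W\|_{L^1}^a F(W)$; this follows from the standing hypothesis \er{Fu0} because $\mu_2>1$.

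Integrating $\frac{d}{dt}m_2(t)\le-\delta$ for some $\delta>0$ on $[0,T_w)$ gives $0\le m_2(t)\le m_2(0)-\delta t$, which is impossible for $t\ge m_2(0)/\delta$; therefore $T_w\le m_2(0)/\delta<\infty$, and the blow-up criterion \er{criterion} from Theorem \ref{ueps} yields $\limsup_{t\to T_w}\|u(\cdot,t)\|_{L^\infty(\R^d)}=\infty$. The main obstacle I anticipate is the rigorous justification of the moment identity on the weak free-energy solution rather than on the smooth approximations: since $|x|^2$ is not admissible in Definition \ref{weakdefine}, one needs a careful cutoff-and-truncation argument with controlled tail behavior supplied by the finite initial second moment and the uniform $L^1\cap L^\infty$ regularity from Theorem \ref{ueps} and Proposition \ref{energy}; by contrast, the bookkeeping identifying the two thresholds in terms of $F(W)$ is essentially a transcription of the variational structure already established in Proposition \ref{belowabove}.
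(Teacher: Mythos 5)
Your proposal is correct and follows essentially the same route as the paper: the same second-moment identity, the same use of Proposition \ref{belowabove}(ii) together with mass conservation, the monotonicity of $F$, and the virial identity $2(d-2s)\|W\|_{L^1(\R^d)}^a F(W)=\tfrac{2(2d-2s-dm)}{m-1}\|W\|_{L^1(\R^d)}^a\|W\|_{L^m(\R^d)}^m$ for the extremal $W$ (obtained from $g'(x_*)=0$), yielding the strictly negative bound $\tfrac{d}{dt}m_2(t)\le-\delta$. The only cosmetic difference is the endgame: you conclude $T_w<\infty$ from $m_2\ge 0$ and then invoke the blow-up criterion \er{criterion} of Theorem \ref{ueps}, while the paper re-derives the $L^\infty$ blow-up directly via the interpolation $\|u_0\|_{L^1(\R^d)}\le c\,\|u\|_{L^\infty(\R^d)}^{2/(d+2)}m_2(t)^{d/(d+2)}$; both are valid.
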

\begin{proof}
Here we show the formal computations, the passing to the limit from the approximated problem \er{kseps} can be done on account of \cite[Theorem 2.11]{BL13} without any further complication. For simplicity, we shall use $\|u\|_r$ instead of $\|u\|_{L^r(\R^d)}$.

The main tool is the evolution of the second moment with time. By integrating by parts in \er{fracks} and symmetrising the second term we compute
\begin{align}
\|u\|_1^a \frac{d}{dt} m_2(t) &= \|u\|_1^a \frac{d}{dt} \int_{\R^d} |x|^2 u(x,t) dx= \|u\|_1^a \left(  2d \int_{\R^d} u^m dx-c_{d,s}(d-2s) \iint_{\R^d \times \R^d} \frac{u(x,t)u(y,t)}{|x-y|^{d-2s}} dxdy \right)\nonumber \\
&=\left( 2d-\frac{2(d-2s)}{m-1} \right) \|u\|_m^m \|u\|_1^a +2(d-2s)F(u) \|u\|_1^a.
\end{align}
We next apply Proposition \ref{belowabove} with some $\mu_2>1$ and the fact $m<2-2s/d$ to obtain
\begin{align}
\|u\|_1^a \frac{d}{dt} m_2(t)& < \left(2d-\frac{2(d-2s)}{m-1} \right) \mu_2 \|W\|_m^m \|W\|_1^a +2(d-2s) F(u_0) \|u_0\|_1^a  \nonumber \\
& < \left(2d-\frac{2(d-2s)}{m-1} \right) \mu_2 \|W\|_m^m \|W\|_1^a+
2(d-2s) F(W) \|W\|_1^a \nonumber \\
& = \left(2d-\frac{2(d-2s)}{m-1} \right) (\mu_2-1) \|W\|_m^m \|W\|_1^a<0. \label{m2t}
\end{align}
Here we have used the non-increasing of $F(u)$ in time and the identity
\begin{align}
2(d-2s) F(W) \|W\|_1^a=-\left(2d-\frac{2(d-2s)}{m-1} \right) \|W\|_m^m \|W\|_1^a
\end{align}
which can be derived by combining \er{06082} and \er{xstar}. So we can infer from \er{m2t} that there exists a $T>0$ such that $\displaystyle \lim_{t \to T} m_2(t)=0$.

In addition, using H\"{o}lder's inequality one has
\begin{align}
\int_{\R^d} u(x) dx =& \int_{|x| \le R} u(x) dx+\int_{|x|>R} u(x) dx \nonumber \\
\le & C R^{d} \|u\|_{L^\infty(\R^d)} +\frac{1}{R^2} m_2(t).
\end{align}
Choosing $R=\left( \frac{C m_2(t)}{\|u\|_{L^\infty(\R^d)}} \right)^{\frac{1}{d+2}}$ gives
\begin{align}
\|u_0\|_{1}=\|u\|_{1} \le c \|u\|_{L^\infty(\R^d)}^{\frac{2}{d+2}} m_2(t)^{\frac{d}{d+2}}
\end{align}
which yields
\begin{align}
\displaystyle \limsup_{t \to T} \|u(\cdot,t)\|_{L^\infty(\R^d)} \ge \displaystyle \lim_{t \to T} \frac{\|u_0\|_{1}^{(d+2)/2}}{c m_2(t)^{d/2}}=\infty.
\end{align}
Thus the proof is completed. $\Box$
\end{proof}

\section{Conclusions}\label{conclu}

This paper concerns equation \er{fracks} for the intermediate exponent $\frac{2d}{d+2s}<m<2-\frac{2s}{d}$. It is shown that there is a threshold value which is characterized by the optimal constant of a variant of HLS inequality $ \iint_{\R^d \times \R^d} \frac{f(x)f(y)}{|x-y|^{d-2s}} dxdy \le C_* \|f\|_{L^1(\R^d)}^{\frac{(d+2s)m-2d}{d(m-1)}} \|f\|_{L^m(\R^d)}^{\frac{m(d-2s)}{d(m-1)}} $ to classify the global existence and finite time blow-up of non-negative solutions, and the equality of the modified HLS inequality is given by the radial steady state $W(x)$ of equation \er{fracks}. Precisely, in the case that the initial free energy $F(u_0)$ is less than $\|W\|_{L^1(\R^d)}^{\frac{(d+2s)m-2d}{2d-2s-dm}} F(W)/\|u_0\|_{L^1(\R^d)}^{\frac{(d+2s)m-2d}{2d-2s-dm}},$ there exists a global weak solution satisfying the free energy inequality when the initial data satisfies $\|u_0\|_{L^1 (\R^d)}^{\frac{(d+2s)m-2d}{2d-2s-dm}} \|u_0\|_{L^m (\R^d)}^m< \|W\|_{L^1 (\R^d)}^{\frac{(d+2s)m-2d}{2d-2s-dm}} \|W\|_{L^m (\R^d)}^m$ while the solution blows up in finite time provided by the condition $\|u_0\|_{L^1 (\R^d)}^{\frac{(d+2s)m-2d}{2d-2s-dm}} \|u_0\|_{L^m (\R^d)}^m> \|W\|_{L^1 (\R^d)}^{\frac{(d+2s)m-2d}{2d-2s-dm}} \|W\|_{L^m (\R^d)}^m$.



\end{document}